\newtheorem{theorem}{Theorem}[section]
\newtheorem*{thm}{Theorem}
\newtheorem{lemma}[theorem]{Lemma}
\newtheorem{proposition}[theorem]{Proposition}
\newtheorem{corollary}[theorem]{Corollary}
\theoremstyle{definition}
\newtheorem{definition}[theorem]{Definition}
\theoremstyle{remark}
\newtheorem{example}[theorem]{Example}
\numberwithin{equation}{section}
\def\sqr#1#2{{\,\vcenter{\vbox{\hrule height.#2pt\hbox{\vrule width.#2pt
height#1pt \kern#1pt\vrule width.#2pt}\hrule height.#2pt}}\,}}
\def\bo{\sqr44\,}
\def\vp{\varphi}
\def\tp#1{\{#1\}}
\def\tpp#1#2#3{\{{#1}{#2}{#3}\}}
\def\tpl#1#2#3{\{\tp{#1}#2#3\}}
\def\tpr#1#2#3{\{#1#2\tp{#3}\}}
\newcommand{\IC}{\mathbb{C}}
\newcommand{\tpc}[3]{\{#1,#2,#3\}}
\newcommand{\tr}{\hbox{tr}\, }
\newcommand{\matr}[4]{
\left[\begin{array}{cc}
#1&#2\\
#3&#4
\end{array}
\right]}
\begin{document}
\title{Cohomology of Jordan triples via Lie algebras}
\author{Cho-Ho Chu} 
\date{\today}
\address{School of Mathematical Sciences\\
Queen Mary, University of London\\
London E1 4NS, UK}
\email{c.chu@qmul.ac.uk}
\author{Bernard Russo}
\address{Department of mathematics, University of California, Irvine, USA}
\email{brusso@uci.edu}

\keywords{Jordan triple, cohomology, TKK algebra, derivation, cocycle, structural transformation, von Neumann algebra} \subjclass{Primary 17C65, 18G60; Secondary 46L70, 16W10}

\begin{abstract} We develop a cohomology theory for
Jordan triples, including the infinite dimensional ones,
by means of the cohomology of TKK Lie algebras. This
enables us to apply Lie cohomological results to the setting of
Jordan triples. Some preliminary results for von Neumann algebras are obtained.

\end{abstract}
\maketitle
\tableofcontents
\section{Introduction}
\begin{quotation}
A veritable army of researchers took the theory of derivations of operator algebras to dizzying heights---producing a theory of cohomology of operator algebras as well as much information about automorphisms of operator algebras---{\it Richard Kadison \cite{Kadison00}}
\end{quotation}

In addition to associative algebras, cohomology groups are defined for Lie algebras and, to some extent, for Jordan algebras.  Since the structures of Jordan derivations and Lie derivations on von Neumann algebras are well understood,  and in view of the above quotation, isn't it time to study the higher dimensional non associative cohomology of a von Neumann algebra?  The present paper is motivated by this rhetorical question.

In this paper  we develop a cohomology theory for
Jordan triples, including the infinite dimensional ones,
by means of the cohomology of TKK Lie algebras. This
enables us to apply Lie cohomological results to the setting of
Jordan triples.  Several references, which will be mentioned below,  use Lie theory as a tool to study Jordan cohomology.

The outline of the paper is the following.
In the rest of this introduction, we give an overview of various cohomology theories, both classical and otherwise. (For a more detailed survey see \cite{russo}.)
In section~\ref{sect2}, the definitions of Jordan triple module and Lie algebra module, as well as the  Tits-Kantor-Koecher (TKK) construction are reviewed, basically following \cite{chu}. It is shown in Theorem~\ref{m} that a Jordan triple module gives rise to a Lie module for the corresponding TKK algebra. The proof of Theorem~\ref{m} is deferred to subsection~\ref{app:1029141}.

After reviewing the cohomology of Lie algebras (with or without an involution) in section~\ref{sec:harris}, two infinite families of cohomology groups are defined for a Jordan triple system $V$
in section~\ref{sect4}, one using the Lie cohomology of the TKK algebra of $V$ and the other using the Lie cohomology of the TKK algebra with its canonical involution $\theta$.  A complete analysis is given for the first cohomology groups in Proposition~\ref{prop:1015141}, which shows that structural transformations on $V$ correspond to derivations of the TKK Lie algebra, and triple derivations on $V$ correspond to the  $\theta$-invariant derivations.

Section~\ref{sect5} contains examples of Jordan cocycles and TKK algebras, and applications, including a characterization of certain 3-cocycles in Theorem~\ref{prop:0928141}, the proof of which appears in  subsection~\ref{app:1029143}.  The applications to von Neumann algebras appear in Theorem~\ref{thm:5.7} and Corollary~\ref{cor:1101141}.

\subsection{Brief survey of cohomology theories}\label{sect:two}
The starting point for the cohomology theory of associative algebras is the paper of Hochschild from 1945 \cite{Hochschild45}. The standard reference of the theory is \cite{CarEil56}.  Two other useful references are due to Weibel (\cite{Weibel94},\cite{Weibel99}).

Shortly after the introduction of  cohomology for associative algebras, there appeared in \cite{CheEil48} a corresponding theory for Lie algebras.   We follow \cite{Jacobson62} for the definitions and initial results. Applications can be found in \cite{Fuks86} and \cite{Knapp88}.

The cohomology theory for Jordan algebras is less well developed than for associative and Lie algebras.  A starting point would seem to be the papers of Gerstenhaber in 1964 \cite{Gerstenhaber64} and Glassman in 1970 \cite{Glassman70PJM}, which concern arbitrary
nonassociative algebras.  A study focussed primarily on Jordan algebras is \cite{Glassman70JAlg}.

We next recall two fundamental results, namely, the Jordan analogs of the first and second Whitehead lemmas as described in
\cite{Jacobson57}.

\begin{theorem}[Jordan analog of first Whitehead lemma \cite{Jacobson51}]
Let $J$ be a finite dimensional semisimple Jordan algebra over a field of characteristic 0 and let $M$ be a $J$-module.  Let $f$ be a linear mapping of $J$ into $M$ such that
\[
f(ab)=f(a)b+af(b).
\]
Then there exist $v_i\in M, b_i\in J$ such that
\[
f(a)=\sum_i((v_ia)b-v_i(ab_i)).
\]
\end{theorem}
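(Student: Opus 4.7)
The plan is to reduce the Jordan statement to the classical first Whitehead lemma for Lie algebras via the TKK construction, in the spirit of the rest of the paper.

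First I would form the TKK Lie algebra $L(J)$ attached to $J$ (viewed, if necessary, as a Jordan triple in its associated triple product), and argue that semisimplicity of $J$ in characteristic zero forces $L(J)$ to be semisimple. In finite dimensions this is standard: the Jordan radical is controlled by the generic trace form, which in turn controls the Killing form of $L(J)$. Next, applying Theorem~\ref{m} to the Jordan-triple module obtained from $M$ promotes $M$ to a Lie module $\widetilde M$ for $L(J)$, and the derivation identity $f(ab)=f(a)b+af(b)$ is precisely what is needed to extend $f$ to a Lie derivation $\widetilde f : L(J)\to \widetilde M$, defined on the Leibniz generators and then verified to be compatible with the full TKK bracket.

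Once these two ingredients are in place, the first Whitehead lemma for Lie algebras provides an element $w\in\widetilde M$ with $\widetilde f(x)=x\cdot w$ for all $x\in L(J)$. Writing $w$ according to the $\mathbb Z$-grading of $\widetilde M$ induced by $L(J)=L_{-1}\oplus L_0\oplus L_1$ and restricting $\widetilde f$ back to the copy of $J$ sitting in $L_1$, one reads off the relevant graded components of $w$ as a finite sum of elementary tensors $v_i\otimes b_i$ in $M\otimes J$. Unwinding the TKK bracket on these components then yields the desired inner-derivation formula $f(a)=\sum_i\bigl((v_ia)b_i-v_i(ab_i)\bigr)$ essentially by the definition of the bracket.

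The step I expect to be the main obstacle is the bookkeeping in this last reduction. The element $w$ produced by Whitehead's lemma can have nonzero components in every graded piece of $\widetilde M$, but only certain combinations contribute to $\widetilde f$ restricted to $J$; to discard the spurious terms and recover the Jordan formula in its stated form one needs to exploit the canonical involution $\theta$ and track carefully how the odd and even components of $w$ interact with the embedding $J\hookrightarrow L(J)$. A secondary difficulty is that the TKK functor does not in general send simple Jordan algebras to simple Lie algebras, so one should either argue module-theoretically throughout or reduce first to simple summands of $J$ before invoking semisimplicity on the Lie side.
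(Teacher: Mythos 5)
First, a point of comparison: the paper does not prove this statement at all. It appears in subsection~\ref{sect:two} as a quoted classical result attributed to Jacobson \cite{Jacobson51}, so there is no in-paper proof to measure your argument against; the closest the paper comes is the remark that the \emph{second} Whitehead lemma admits a Lie-theoretic proof (\cite[pp. 324--336]{Jacobson68}), which is the philosophy you are following. Your TKK strategy is coherent, and most of its ingredients are in fact supplied later in the paper: a Jordan-algebra derivation is a triple derivation for $\{abc\}=(ab)c+(cb)a-(ac)b$, hence extendable, and Lemma~\ref{lem:0922141} produces the $\theta$-invariant Lie derivation $\frak{L}_1(f)$; semisimplicity of $\frak{L}(J)$ follows from the radical computation for TKK algebras in \cite{KuhRos78}; and Proposition~\ref{prop:1015141}(iii), together with the computation of $B^1_\theta(V,M)$ preceding it, carries out exactly the ``unwinding'' you defer to the end.

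Two genuine gaps remain. (1) The first Whitehead lemma for Lie algebras requires the coefficient module to be finite dimensional (the Casimir argument breaks down otherwise, and $H^1$ of a semisimple Lie algebra with values in an infinite-dimensional module need not vanish); your reduction therefore needs $M$, hence $\frak{L}(M)$, to be finite dimensional, a hypothesis you never impose and which is absent from the statement as quoted. (2) The element $w$ produced by the Lie lemma satisfies $\frak{L}_1(f)=\mathrm{ad}(w)$ but need not lie in the $+1$-eigenspace of $\widetilde\theta$. Restricting $\mathrm{ad}(m,\varphi,n)$ to $V$ gives $x\mapsto-\varphi(x)$ with $\varphi\in M_0$ an arbitrary inner \emph{structural} transformation, i.e.\ a sum of box operators; writing $L_u$ for Jordan multiplication, one has $u\bo m=L_{um}+[L_u,L_m]$, so such a $\varphi$ contains multiplication terms $L_{um}$ that are not of the required commutator form. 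Your plan to ``read off graded components and discard spurious terms'' will not remove these terms, since the grading does not see them; what is needed is the averaging $w\mapsto\frac{1}{2}(w+\widetilde\theta w)$, legitimate because $\frak{L}_1(f)$ is $\theta$-invariant and $\mathrm{ad}(\widetilde\theta w)=\widetilde\theta\,\mathrm{ad}(w)\,\theta$. After averaging one has $\varphi^\natural=-\varphi$, which forces the antisymmetric form $\sum_i(m_i\bo v_i-v_i\bo m_i)=2\sum_i[L_{m_i},L_{v_i}]$ and hence the stated formula $f(a)=\sum_i((v_ia)b_i-v_i(ab_i))$.
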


\begin{theorem}[Jordan analog of second Whitehead lemma \cite{Penico51}]\label{5.6}
Let $J$ be a finite dimensional  separable\footnote{Separable, in this context, means that the algebra remains semisimple with respect to all extensions of the ground field.  For algebraically closed fields, this is the same as being semisimple} Jordan algebra  and let $M$ be a $J$-module.  Let $f$ be a bilinear mapping of $J\times J$ into $M$ such that
\[
f(a,b)=f(b,a)
\]
and
\[
f(a^2,ab)+f(a,b)a^2+f(a,a)ab
=f(a^2b,a)+f(a^2,b)a+(f(a,a)b)a
\]
Then there exist a linear mapping $g$ from $J$ into $M$ such that
\[
f(a,b)=g(ab)-g(b)a-g(a)b
\]
\end{theorem}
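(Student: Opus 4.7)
The content of the theorem is the vanishing statement $H^{2}(J,M)=0$ for separable $J$: the symmetry $f(a,b)=f(b,a)$ together with the five-term identity displayed are exactly the Jordan $2$-cocycle conditions, and the formula $f(a,b)=g(ab)-g(a)b-g(b)a$ exhibits $f$ as a Hochschild-type coboundary. My plan is to realise $f$ as the obstruction to splitting a square-zero Jordan extension, and then split it by invoking the Wedderburn principal theorem for separable Jordan algebras.

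First, I form the null extension $E=J\oplus M$ with commutative product
\[
(a,m)(b,n)=(ab,\ an+mb+f(a,b)).
\]
A direct verification (obtained by expanding the Jordan identity $(xy)(xx)=x(y(xx))$ on elements $x=(a,m)$, $y=(b,n)$ and collecting the $M$-components linear in the cocycle) shows that the five-term identity in the hypothesis is precisely what is needed for $E$ to be a Jordan algebra, while the symmetry of $f$ delivers commutativity. Thus $E$ is Jordan, $M$ is an ideal with $M^{2}=0$, and $E/M\cong J$.

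Since $J$ is separable, the Wedderburn principal theorem for Jordan algebras (Albert--Penico) furnishes a Jordan subalgebra $J'\subseteq E$ with $E=J'\oplus M$ as vector spaces. The inverse of the induced isomorphism $J'\xrightarrow{\sim}J$ has the form $a\mapsto (a,g(a))$ for a unique linear $g:J\to M$, and it is a Jordan algebra homomorphism. Expanding this equality and using $M^{2}=0$ to kill the product $g(a)g(b)$ gives
\[
(ab,g(ab)) = (a,g(a))(b,g(b)) = (ab,\ ag(b)+g(a)b+f(a,b)),
\]
and comparing second coordinates yields exactly $f(a,b)=g(ab)-g(a)b-g(b)a$, as required.

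The real work is the Wedderburn principal theorem, which is the main obstacle. The classical inductive strategy is to reduce to the case in which $M$ is an irreducible $E$-module, then use separability of $J$ to lift a complete family of orthogonal primitive idempotents from $J$ to $E$, and finally exploit the Peirce decomposition relative to these lifts to assemble the complementary subalgebra. The exceptional Albert algebra is the delicate point, since no associative envelope is available; Penico handles it by explicit structural computation. An alternative route more consonant with the present paper would be to pass through the TKK Lie algebra of $J$ viewed as a Jordan triple, transport the $2$-cocycle across the TKK construction, apply Whitehead's second lemma for semisimple Lie algebras, and descend the resulting splitting while keeping it equivariant under the canonical involution $\theta$; on that route the twin technical points are to show that separability of $J$ ensures semisimplicity of the TKK algebra and that the Lie splitting can be chosen $\theta$-invariant so as to descend to a Jordan splitting.
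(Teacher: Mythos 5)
Your reduction is correct, but note that the paper itself offers no proof of this statement: it is recalled as a classical result, attributed to Penico's paper on the Wedderburn principal theorem, with Jacobson's book cited for two proofs (one via the classification of finite dimensional Jordan algebras, one via Lie algebras). Your argument is precisely the standard derivation from the Wedderburn principal theorem that underlies the attribution to \cite{Penico51}: the symmetry and the five-term identity are exactly what make the split null extension $E=J\oplus M$ a Jordan algebra (your expansion of $(x^2y)x=x^2(yx)$ checks out, the terms linear in $m,n$ cancelling by the module axioms and the $f$-terms reproducing the displayed identity), and a Wedderburn complement $J'$ yields the linear map $g$ with $f(a,b)=g(ab)-g(a)b-g(b)a$. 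The comparison of second coordinates is also correct.

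The one thing to be clear-eyed about is that your proof concentrates all of the difficulty in the invoked Wedderburn principal theorem, which you only sketch; that theorem is essentially equivalent to the vanishing statement $H^2(J,M)=0$ being proved (split null extensions are the case $N^2=0$ of the general lifting problem), so as written your argument is a reduction to a cited classical theorem rather than a self-contained proof. That is legitimate --- it is exactly how the literature the paper cites organizes the material --- but the inductive lifting of idempotents and the treatment of the exceptional algebra are where the content lives. Your proposed alternative via the TKK construction and the Lie-theoretic second Whitehead lemma is in the spirit of Jacobson's second proof and of the present paper's program; the two technical points you flag (semisimplicity of the TKK algebra for separable $J$, and $\theta$-equivariance of the Lie splitting) are indeed the ones that would need to be supplied, and the paper's Proposition~\ref{prop:1015141} and its Corollary on semisimple TKK algebras handle only the degree-one analogue, not the degree-two descent you would need here.
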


Two proofs of Theorem~\ref{5.6} are given in \cite{Jacobson68}.  One of them, which uses the classification of finite dimensional Jordan algebras, is outlined in \cite[4.3.1]{russo}.   The other proof  uses Lie algebras and is contained in \cite[pp. 324--336]{Jacobson68}.

A study of low dimensional cohomology for quadratic Jordan algebras is given in \cite{McCrimmon71}.  Since quadratic Jordan algebras (which coincide with ``linear'' Jordan algebras over characteristic 0 fields) can be considered a bridge from Jordan algebras to Jordan triple systems, this would seem to be a good place to look for exploring cohomology theory for Jordan triples.   Indeed, this is hinted at in \cite{McCrimmon82}, since although  \cite{McCrimmon71} is about Jordan algebras, the concepts are phrased in terms of the associated triple product $\{abc\}=(ab)c+(cb)a-(ac)b$.

 However, both papers stop short of defining higher dimensional cohomology groups.
 The paper \cite{McCrimmon71}, which is mostly concerned with representation theory, proves,  for the only cohomology groups defined, the linearity of the functor $H^n$:
  \[
 H^n(J,\oplus_i M_i)=\oplus_i H^n(J,M_i),\quad n=1,2.
 \]
The paper \cite{McCrimmon82}, which is mostly concerned with compatibility of tripotents in Jordan triple systems, proves versions of the linearity of the functor $H^n$, $n=1,2$,  corresponding to the Jordan triple structure.

The earliest work on cohomology of triple systems seems to be \cite{Harris61} (Lie triple systems),  which is discussed in section~\ref{sec:harris}. Four decades later, the second paper on the cohomology of Lie triple systems appeared \cite{HodPar02}.

The following is from the review \cite{Seibtrev} of \cite{Carlsson76} (associative triple systems).
\begin{quotation}
``A cohomology for associative triple systems is defined, with the main purpose to get quickly the cohomological triviality of finite-dimensional separable objects over fields of characteristic $\ne  2$, i.e., in particular the Whitehead lemmas and the Wedderburn principal theorem.''
\end{quotation}

The authors of the present paper know of only two other references dealing with the Wedderburn principal theorem in the context of triple systems, namely, \cite{Carlsson77} (alternative triple systems) and \cite{KuhRos78} (Jordan triple systems).
In the latter paper, the  well-known Koecher-Tits-construction of a Lie algebra from a Jordan algebra is generalized to Jordan pairs.  The radical of this Lie algebra is calculated in terms of the given Jordan pair and a Wedderburn decomposition theorem for Jordan pairs (and triples) in the characteristic zero case is proved.

Finally, we mention that a more general approach to cohomology of algebras and triple systems appears in the paper of Seibt \cite{Seibt75}.

\section{Jordan triples and TKK Lie algebras} \label{sect2}

By a {\it Jordan triple}, we mean a real or complex vector space $V$, equipped
with a Jordan triple product $\{\cdot,\cdot,\cdot\} : V^3 \rightarrow V$
which is linear and symmetric in the outer variables,  conjugate linear in the middle variable, and
satisfies the Jordan triple identity
\[\{x,y,\{a,b,c\}\}=\{\{x,y,a\},b,c\}-\{ a,\{y,x,b\},c\}
+\{a,b,\{x,y,c\}\}\]
for $a,b,c,x,y \in V$. Given two elements $a, b$ in a Jordan triple $V$, we define
the {\it box operator} $a\bo b : V \rightarrow V$ by $a\bo b(\cdot) = \{a,b,\cdot\}$.

All Lie algebras in this paper are real or complex. We construct a
cohomology theory of Jordan triples using the Tits-Kantor-Koecher
(TKK) Lie algebras associated with them. Although we could develop
the theory for {\it all} Jordan triples, we focus on the
nondegenerate ones, which will be assumed throughout, to avoid
unnecessary complication. For degenerate Jordan triples, the
construction is exactly the same albeit more computation is
involved. A Jordan triple is called {\it nondegenerate} if for each $a\in V$, the condition
$\{a,a,a\}= 0$ implies $a=0$. Given that $V$ is
nondegenerate, one has
\[ \sum_j a_j \bo b_j = \sum_k c_k \bo d_k ~\Rightarrow~  \sum_k b_j \bo a_j = \sum_j d_k \bo c_k
\quad (a_j, b_j, c_k, d_k \in V)\]
which facilitates a simple definition of the TKK Lie algebra $\frak L (V)$ of $V$, with an invoultion $\theta$
(cf. \cite[p.45]{chu}), where
$$\frak L (V) = V \oplus V_0 \oplus V,$$
$V_0 = \{ \sum_j a_j \bo b_j: a_j, b_j \in V\}$,  the Lie product is defined by
\begin{equation}\label{eq:0418152}
[(x,h,y), (u,k,v)] = (hu - kx,\, [h,k]+x \bo v -u\bo y,\,
k^\natural y - h^\natural v),
\end{equation}
and for each $h= \sum_i a_i \bo b_i$ in the Lie subalgebra $V_0$ of
$\frak L (V)$, the map
$h^\natural : V \rightarrow V$  is well defined by
$$h^\natural = \sum_i b_i \bo a_i.$$
The involution  $\theta :
\frak L(V) \rightarrow \frak L(V)$ is given by
$$\theta (x,h,y) = (y,-h^\natural ,x) \qquad ((x,h,y) \in \frak L(V)).$$
Identifying $V$ with the subspace $\{(x,0,0) :x\in V\}$ of $\frak L(V)$, we have the
following relationship between the triple and Lie products:
$$\{a,b,c\} = [\,[a,\theta(b)],\, c] \qquad (a,b,c \in V).$$
If no confusion is likely, we often simplify the notation $\{a,b,c\}$ to $\{abc\}$.

Given a Lie algebra $\frak L$ and a module $X$ over $\frak L$, we
denote the action of $\frak L$ on $X$ by
$$(\ell, x) \in \frak L \times X \mapsto \ell . x \in X$$
so that
$$ [\ell, \ell '].x = \ell'.(\ell. x) - \ell .(\ell'. x).$$

\begin{definition}\label{mo} Let $V$ be a Jordan triple. A vector space $M$ over the same scalar field
is called a {\it Jordan triple $V$-module} (cf. \cite{russo}) if
it is equipped with three mappings
\[ \{\cdot,\cdot,\cdot\}_1: M \times V\times V \rightarrow M, \quad
\{\cdot,\cdot,\cdot\}_2: V \times M\times V \rightarrow M,
\{\cdot,\cdot,\cdot\}_3: V \times V\times M \rightarrow M\] such
that
\begin{enumerate}
\item [(i)] $\{a,b,c\}_1 = \{c,b,a\}_3$; \item[(ii)] $\{\cdot,
\cdot,\cdot\}_1$ is linear in the first two variables and
conjugate linear in the last variable, $\{\cdot,\cdot,\cdot\}_2$
is conjugate linear in all variables; \item[(iii)] denoting by $\{\cdot,\cdot,\cdot\}$
any of the products $\{\cdot,\cdot,\cdot\}_j ~(j=1,2,3)$, the
identity
$$\{a,b,\{c,d,e\}\} = \{\{a,b,c\},d,e\} -\{c,\{b,a,d\},e\} +
\{c,d,\{a,b,e\}\}$$ is satisfied whenever one of
the above elements is in $M$ and the rest in $V$.
\end{enumerate}
\end{definition}
For convenience, we shall omit the subscript $j$ from
$\{\cdot,\cdot,\cdot\}_j$ in the sequel. A $V$-module $M$ is
called {\it nondegenerate} if for each $m \in M$, each one of the conditions
$$ \{m,V,V\} =\{0\};~ \{V,m,V\}=\{0\}$$ implies $m=0.$
A nondegenerate Jordan triple $V$ is a nondegenerate module over
itself. For a JB*-triple $V$, its dual $V^*$ is a nondegenerate
$V$-module. {\it All Jordan triple modules throughout the paper
are assumed to be nondegenerate}.

  Given $a,b\in V$, the box operator
$a\bo b:V\rightarrow V$ can also  be considered as a mapping from
$M$ to $M$. Similarly, for $u\in V$ and $m \in M$, the ``{\it box
operators}''
$$u \bo m,\, m\bo u : V \longrightarrow M$$ are defined in a
natural way as $v\mapsto \{u,m,v\}$ and $v\mapsto \{m,u,v\}$
respectively.

Given $a,b\in V$, the identity (iii) in Definition \ref{mo}
implies
$$[a\bo b, u\bo m] = \{a,b,u\}\bo m - u\bo \{m,a,b\}$$
and
$$[a\bo b, m\bo u] = \{a,b,m\}\bo u - m\bo \{u,a,b\}.$$
for $u \in V$ and $m\in M$. We also have $[u\bo m, a\bo b] = \{u,m,a\}\bo b - a\bo\{b,u,m\}$
and similar identity for $[m\bo u, a\bo b]$.

Using similar arguments to the proof in \cite[Lemma
1.3.7]{chu}, one can show that
\begin{eqnarray}\label{sharp}
&& \sum_i u_i \bo m_i + \sum_j
n_j \bo v_j  = \sum_k u_k' \bo m_k' + \sum_\ell
n_\ell' \bo v_\ell'\\&\Rightarrow&   \sum_i m_i\bo u_i  + \sum_j
v_j  \bo n_j = \sum_k  m_k'\bo u_k' + \sum_\ell
v_\ell' \bo n_\ell'\nonumber
 \end{eqnarray} for $u_i, v_j,u_k', v_\ell' \in
V$ and $m_i, n_j m_k', n_\ell'\in M$.

  Let $M_0$ be the linear span of
$$\{u\bo m, n\bo v: u,v \in V, m,n \in M\}$$
in the vector space $L(V,M)$ of linear maps from $V$ to $M$. Then $M_0$ is  the space of {\it
inner structural transformations} $\hbox{Instrl}\, (V,M)$ (see \cite[Section 7]{McCrimmon82}) .
Extending the above product by linearity, we can define an action
of $V_0$ on $M_0$ by
$$ (h, \varphi) \in V_0 \times M_0 \mapsto  [h,\varphi]\in M_0.$$

\begin{lemma} $M_0$ is a $V_0$-module of the Lie algebra $V_0$.
\end{lemma}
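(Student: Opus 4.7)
The lemma has two parts: (a) the prescription $(h,\varphi)\mapsto [h,\varphi]$ gives a well-defined bilinear map $V_0\times M_0\to M_0$, and (b) this action satisfies the Lie-module identity $[[h,k],\varphi]=[h,[k,\varphi]]-[k,[h,\varphi]]$. I will address (a) first, then devote the bulk of the work to (b).

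For (a), closure is immediate: the displayed formulas just after Definition \ref{mo},
\[
[a\bo b,\,u\bo m]=\{a,b,u\}\bo m - u\bo\{m,a,b\},\qquad [a\bo b,\,m\bo u]=\{a,b,m\}\bo u - m\bo\{u,a,b\},
\]
exhibit $[a\bo b,\varphi]$ as an element of $M_0$ for elementary $\varphi$, so closure on all of $M_0$ follows by linearity. For well-definedness in the $V_0$-slot, suppose $\sum_j a_j\bo b_j=0$ in $V_0$; by \cite[Lemma 1.3.7]{chu}, also $\sum_j b_j\bo a_j=0$, and applying the two displayed formulas shows $\sum_j [a_j\bo b_j,\varphi]=0$ on each elementary $\varphi$. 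For well-definedness in the $M_0$-slot, I would use the analogous relation (\ref{sharp}) that is already established in the text, together with the formulas above.

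For (b), by bilinearity it suffices to verify the identity for $h=a\bo b$, $k=c\bo d$, and $\varphi$ of the elementary form $u\bo m$ or $m\bo u$. The Jordan triple identity for $V$ gives the well-known commutator formula
\[
[a\bo b,\,c\bo d]=\{a,b,c\}\bo d - c\bo\{d,a,b\}
\]
in $V_0$. Expanding $[[h,k],\varphi]$ by this formula and the displayed bracket formulas above produces a sum of four terms, each of the shape $X\bo m$ or $u\bo Y$ with $X\in V$, $Y\in M$. Similarly $[h,[k,\varphi]]$ and $[k,[h,\varphi]]$ each yield four terms, so the desired identity becomes a statement that a particular linear combination of eight such terms on each side coincides. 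Matching the $X\bo m$ pieces reduces to the ordinary Jordan triple five-term identity in $V$ (applied to $a,b,c,d,u$), while matching the $u\bo Y$ pieces reduces to the module version, namely identity (iii) of Definition \ref{mo}, applied with the element $m\in M$ in the appropriate slot.

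The main obstacle is not conceptual but notational: Definition \ref{mo}(ii) imposes different conjugate-linearity patterns on $\{\cdot,\cdot,\cdot\}_1$, $\{\cdot,\cdot,\cdot\}_2$, $\{\cdot,\cdot,\cdot\}_3$, so when rewriting an expression such as $u\bo\{m,a,b\}$ in order to apply axiom (iii), one must carefully track which of the three products is involved and in which slot each argument sits. Once this bookkeeping is done, the $X\bo m$-terms cancel by the Jordan triple identity for $V$ and the $u\bo Y$-terms cancel by Definition \ref{mo}(iii), establishing the Lie-module axiom and completing the proof.
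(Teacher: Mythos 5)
Your proposal is correct and follows essentially the same route as the paper's proof: reduce to elementary $h=a\bo b$, $k=c\bo d$, $\varphi=u\bo m$ (or $m\bo u$), expand both sides of the module identity via the displayed commutator formulas, and observe that the $V$-valued first components cancel by the Jordan triple identity while the $M$-valued components cancel by Definition~\ref{mo}(iii). Your preliminary remarks on closure and well-definedness are a harmless addition that the paper omits; they are essentially automatic because $[h,\varphi]$ is the operator commutator of maps already living in $L(V,V)$ and $L(V,M)$, so it cannot depend on how $h$ or $\varphi$ is written as a sum of box operators.
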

\begin{proof}
We are required to show that
\begin{equation}\label{eq:0710141}
[[h,k],\varphi]=[h,[k,\varphi]]-[k,[h,\varphi]].
\end{equation}
We can assume that $h=a\bo b$, $k=c\bo d$ and $\varphi=w\bo m$ or $m\bo w$.  We assume $\varphi=w\bo m$, the other case being similar.
For the left side of (\ref{eq:0710141}), we have
\begin{eqnarray*}
[[a\bo b,u\bo v],w\bo m]&=&[\tp{abu}\bo v-u\bo\tp{vab},w\bo m]\\
&=&\tpp{\tp{abu}}{v}{w}\bo m-w\bo \tpp{m}{\tp{abu}}{v}\\
&-&\tpp{u}{\tp{vab}}{w}\bo m+w\bo \tpp{m}{u}{\tp{vab}}\\
&=&(\tpp{\tp{abu}}{v}{w}-\tpp{u}{\tp{vab}}{w})\bo m\\
&-&w\bo (\tpp{m}{\tp{abu}}{v}-\tpp{m}{u}{\tp{vab}}).
\end{eqnarray*}
For the right side of (\ref{eq:0710141}), we have
\begin{eqnarray*}
\lefteqn{[a\bo b,[u\bo v,w\bo m]]-[u\bo v,[a\bo b,w\bo m]]=}\\
&&[a\bo b,\tp{uvw}\bo m-w\bo \tp{muv}]-[u\bo v,\tp{abw}\bo m-w\bo\tp{mab}]\\
&=&\tpp{a}{b}{\tp{uvw}}\bo w-\tp{uvw}\bo\tp{mab}-\tp{abw}\bo\tp{muv}+w\bo\tpp{\tp{muv}}{a}{b}\\
&-&\tpp{u}{v}{\tp{abw}}\bo m+\tp{abw}\bo \tp{muv}+\tp{uvw}\bo\tp{mab}-w\bo\tpp{\tp{mab}}{u}{v}\\
&=&(\tpp{a}{b}{\tp{uvw}}-\tpp{u}{v}{\tp{abw}} )\bo m-w\bo(\tpp{\tp{mab}}{u}{v}-\tpp{\tp{muv}}{a}{b}).
\end{eqnarray*}
(\ref{eq:0710141}) now follows from the main identity for Jordan triples.
\end{proof}

Let $V$ be a Jordan triple and $\frak L (V)$ its TKK Lie algebra.
Given a triple $V$-module $M$, we now construct a corresponding
Lie module $\mathcal{L} (M)$ of the Lie algebra $\frak L (V)$ as
follows.

Let $$\mathcal{L} (M) = M \oplus M_0 \oplus M$$ and define the
action
$$ ((a,h,b), (m,\varphi,n)) \in \frak L(V) \times \mathcal{L} (M)
\mapsto (a,h,b). (m,\varphi,n) \in \mathcal{L} (M)$$ by
\begin{equation}\label{eq:0418151}
(a,h,b). (m,\varphi,n) = (hm-\varphi a,~ [h,\varphi] +a\bo
n - m\bo b,~ \varphi^\natural b - h^\natural(n)\,),
\end{equation}
 where, for $h
= \sum_i a_i\bo b_i$ and $\varphi = \sum_i u_i \bo m_i + \sum_j
n_j \bo v_j$, we have the following natural definitions
$$hm= \sum_i \{a_i,b_i,m\},\quad \varphi a= \sum_i \{u_i,  m_i,a\} +
\sum_j\{n_j,
 v_j, a\}, \quad  \varphi^\natural = \sum_i m_i \bo u_i +\sum_j v_j \bo
 n_j$$ in which $\vp^\natural$ is well-defined by (\ref{sharp}).

 \begin{theorem}\label{m} Let $V$ be a Jordan triple and let $\frak L (V)$
 be its TKK Lie algebra. Let $M$ be a triple $V$-module. Then
 $\mathcal{L} (M)$ is a Lie $\frak L(V)$-module.
 \end{theorem}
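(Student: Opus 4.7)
The plan is to verify the defining Lie-module identity
$$[\ell,\ell'].x = \ell'.(\ell.x) - \ell.(\ell'.x)$$
for $\ell = (a,h,b)$, $\ell' = (u,k,v) \in \frak L(V)$ and $x = (m,\varphi,n) \in \mathcal{L}(M)$ by expanding both sides via (\ref{eq:0418152}) and (\ref{eq:0418151}) and comparing the three components in $M \oplus M_0 \oplus M$ separately. Since every operation involved is (conjugate) linear in each of its arguments, I may reduce to the case $h = a_1 \bo b_1$, $k = c_1 \bo d_1$, and $\varphi$ of one of the generating forms $w \bo m_1$ or $m_1 \bo w$; the general statement then follows by linearity, with (\ref{sharp}) guaranteeing that $\varphi^\natural$ remains well defined throughout.

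The middle ($M_0$) component is the heart of the matter. On the left it contributes $[[h,k],\varphi]$, matched on the right by $[h,[k,\varphi]] - [k,[h,\varphi]]$; the equality of these is exactly the content of the preceding lemma that $M_0$ is a $V_0$-module. The remaining middle-component terms are box expressions such as $a \bo (\varphi^\natural b - h^\natural n)$ and $(hm - \varphi a) \bo b$, together with their analogues obtained by swapping $\ell$ and $\ell'$; these are reduced by means of the compatibility relations $[a\bo b, u\bo m] = \{abu\}\bo m - u\bo\{mab\}$ and $[a\bo b, m\bo u] = \{abm\}\bo u - m\bo\{uab\}$ displayed in the paragraph immediately before the lemma, together with the defining formulas for $h^\natural$ and $\varphi^\natural$.

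The first and third components are related by the formal symmetry swapping $a\leftrightarrow b$, $u\leftrightarrow v$, $m\leftrightarrow n$ and applying $\natural$ everywhere, so only one of them needs a direct check; I would do the first. Each side then expands into a sum of $M$-valued expressions $\{?,?,?\}$ with exactly one slot occupied by an element of $M$ and the other two by elements of $V$, and regrouping via the module axiom (iii) of Definition~\ref{mo}, applied according to which slot carries the $M$-entry, produces the required cancellation. The standing nondegeneracy assumption on $V$ and on $M$ enters only implicitly, through the well-definedness of the $\natural$ operations on $V_0$ and $M_0$.

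The main obstacle is purely combinatorial: each of the three components produces on the order of a dozen summands on either side, and the bookkeeping required to pair them up correctly is substantial. The cleanest strategy is to treat the two cases $\varphi = w \bo m_1$ and $\varphi = m_1 \bo w$ in parallel but separately, since the positions of the box operators and, more importantly, the locations at which $\natural$ is applied, are arranged differently in the two cases. Beyond the Jordan triple identity, the module identity (iii), and the already established $V_0$-module structure of $M_0$, no further tool is required.
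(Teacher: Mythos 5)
Your proposal is correct and follows essentially the same route as the paper's proof in subsection~\ref{app:1029141}: a direct componentwise expansion of both sides of the module identity for generators $h=a_1\bo b_1$, $k=c_1\bo d_1$ and $\varphi$ of the form $w\bo m_1$ or $m_1\bo w$, with the cancellations effected by the Jordan triple identity, the module axiom (iii) of Definition~\ref{mo}, and the box-operator commutation relations. Your two refinements --- invoking the already-proved lemma that $M_0$ is a $V_0$-module for the $[[h,k],\varphi]$ block instead of re-expanding it, and disposing of the third component via the $\theta$/$\widetilde\theta$-symmetry (which needs only the easy direct check that $\widetilde\theta(\ell.\mu)=\theta(\ell).\widetilde\theta(\mu)$) rather than leaving it, as the paper does, as an exercise --- are sound but do not change the nature of the argument.
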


The proof of Theorem~\ref{m} consists of straightforward but tedious calculations.  Details can be found in subsection~\ref{app:1029141}.

\section{Cohomology of Lie algebras with involution}\label{sec:harris}

Let $T$ be a Lie triple system. Harris \cite[p.\,155]{Harris61} has developed  a
cohomology theory for $T$ in which the cohomology groups are derived from the ones
of its  enveloping Lie algebra
$\frak{L}_u = T + [T,
T]$ where $\frak{L}_u$  is equipped with an involution $\theta$ and the cochains in the cohomology complex are
  invariant under $\theta$.

Our Jordan triple cohomology makes use of TKK Lie algebras which are involutive. To pave
the way, we review briefly the cohomology for Lie algebras, with or without an involution.
Let $\frak{L}$ be a (real or complex) Lie algebra with involution
$\theta$.

\begin{definition} Given an involutive Lie algebra $(\frak{L},
\theta)$, an {\it $(\frak{L}, \theta)$-module} is a (left)
$\frak{L}$-module $\frak M$, equipped with an involution
$\widetilde\theta: \frak M \rightarrow \frak M$ satisfying
$$\widetilde\theta(\ell .\mu) = \theta(\ell). \widetilde\theta(\mu) \qquad
(\ell \in \frak{L}, \mu\in \frak M).$$ We also call $\frak M$ an
{\it involutive $\frak{L}$-module} if $\theta$ is understood.
\end{definition}
For $\ell \in \frak L$ and $\mu \in \frak M$, we define
$$[\ell, \mu]:= \ell.\mu \quad {\rm and} \quad [\mu,\ell]:= - \ell.\mu.$$

Let $\frak{L}^k=\overbrace{\frak{L} \times \cdots \times
\frak{L}}^{k-{\rm times}}$ be the $k$-fold cartesian product of
$\frak L$. A $k$-linear map $\psi: \frak {L}^k \rightarrow \frak
M$ is called {\it $\theta$-invariant} if
$$\psi (\theta x_1, \cdots, \theta x_k) = \widetilde \theta
\psi(x_1, \cdots, x_k) \quad {\rm for} \quad (x_1, \cdots, x_k)
\in \frak{L} \times \cdots \times \frak{L}.$$

Let $(\frak{L}, \theta)$ be an involutive Lie algebra and $\frak
M$ an $(\frak{L}, \theta)$-module. We define $A^0(\frak{L},\frak{M}) = \frak M$ and
$A_\theta^0(\frak{L},\frak{M})$ to be the 1-eigenspace of $\widetilde\theta$: $A_\theta^0(\frak{L}, \frak M)=\{\mu\in \frak{M}:\widetilde\theta\mu=\mu\}$.

For $k =1,2, \ldots$, we let
$$A^k(\frak{L}, \frak M) = \{\psi: \frak {L}^k \rightarrow \frak
M~ |~ \psi~ {\rm is~ k\hbox{-}linear ~ and~ ~ alternating}\}\quad
{\rm and}$$
$$A^k_\theta(\frak{L}, \frak M) = \{\psi \in A^k(\frak{L}, \frak M):
 |~ \psi~ {\rm is~ \theta\hbox{-}invariant}\}.$$

For $k=0,1,2,\ldots$, we define the {\it coboundary operator} $d_k
: A^k(\frak{L}, \frak M) \rightarrow
A^{k+1}(\frak{L}, \frak M)$ by $d_0 m(x)= x.m$ and for $k \geq 1$,
\begin{eqnarray}\label{eq:1015141}
\nonumber
(d_k\psi)(x_1, \ldots, x_{k+1})& =&  \sum_{\ell =1}^{k+1}
(-1)^{\ell
+1} x_\ell . \psi(x_1, \ldots, \widehat x_\ell, \ldots, x_{k+1})\\
&& + \sum_{1\leq i<j\leq k+1} (-1)^{i+j} \psi([x_i,x_j], \ldots,
\widehat x_i, \ldots, \widehat x_j, \ldots, x_{k+1})
\end{eqnarray}
where the symbol $\widehat z$ indicates the omission of $z$. The restriction of
 $d_k$ to the subspace $A^k_\theta(\frak{L}, \frak M)$, still denoted by $d_k$,
  has range $A^k_\theta(\frak{L}, \frak M)$  since a simple verification shows
that $d_k\psi$ is $\theta$-invariant and alternating  whenever
$\psi$ is. Also, we have $d_k d_{k-1}=0$ for $k=1,2, \ldots$ (cf.
\cite[p.\,167]{Knapp88}) and the two cochain complexes
$$A^0(\frak{L},
\frak M) \quad \longrightarrow ^{\mbox{\hspace{-.2in} $^{d_0}$}}
\quad A^1(\frak{L}, \frak M) \quad \longrightarrow
^{\mbox{\hspace{-.2in} $^{d_1}$}}\quad A^2(\frak{L}, \frak M)\quad
\longrightarrow ^{\mbox{\hspace{-.2in} $^{d_2}$}}\quad \cdots $$
$$A_\theta^0(\frak{L},
\frak M) \quad \longrightarrow ^{\mbox{\hspace{-.2in} $^{d_0}$}}
\quad A_\theta^1(\frak{L}, \frak M) \quad \longrightarrow
^{\mbox{\hspace{-.2in} $^{d_1}$}}\quad A_\theta^2(\frak{L}, \frak
M)\quad \longrightarrow ^{\mbox{\hspace{-.2in} $^{d_2}$}}\quad
\cdots .$$ We often omit the subscript $k$ from $d_k$ if there is
no ambiguity.

As usual, we define the {\it $k$-th cohomology group of $\frak L
$ with coefficients in
$\frak M$} to be the quotient
$$H^k(\frak{L},
\frak{M}) = \ker d_k /d_{k-1}(A^{k-1}(\frak{L}, \frak{M}))= \ker d_k /\hbox{im}\, d_{k-1}$$
for $k=1,2,\ldots $ and  define $H^0(\frak L, \frak{M}) =
\ker d_0$.
We  define the {\it $k$-th involutive cohomology group of $(\frak L,
\theta)$ with coefficients in an $(\frak L, \theta)$-module}
$\frak M$ to be the quotient
$$H_\theta^k(\frak{L},
\frak{M}) = \ker d_k /d_{k-1}(A^{k-1}_\theta(\frak{L}, \frak{M}))= \ker d_k /\hbox{im}\, d_{k-1}$$
for $k=1,2,\ldots $ and  define $H_\theta^0(\frak L, \frak{M}) =
\ker d_0 \subset H^0(\frak{L},
\frak{M})$.

For $k=1,2, \ldots$,  the map
$$\psi + d_{k-1}(A^{k-1}_\theta(\frak{L}, \frak{M})) \in H_\theta^k(\frak{L},
\frak{M})  \mapsto \psi + d_{k-1}(A^{k-1}(\frak{L}, \frak{M})) \in H^k(\frak{L},
\frak{M})$$ identifies $H_\theta^k(\frak{L},
\frak{M})$ as a subgroup of $H^k(\frak{L},
\frak{M})$.

\section{Cohomology of Jordan triples}\label{sect4}

\subsection{The cohomology groups}

Let $V$ be a Jordan triple and let $\frak{L}(V)= V \oplus V_0
\oplus V$ be its TKK Lie algebra with the  involution
$\theta(a,h,b)=(b,-h^\natural,a)$. Given a $V$-module $M$, we have
shown in Theorem \ref{m} that $\frak{L}(M)= M \oplus M_0\oplus M$
is an $\frak{L}(V)$-module. We define an induced involution
$\widetilde\theta :\frak{L}(M) \rightarrow \frak{L}(M)$ by
$$\widetilde \theta(m,\vp,n) = (n, -\vp^\natural, m)$$ for
$(m,\vp,n) \in M \oplus M_0\oplus M.$\\

\begin{lemma}$\frak{L}(M)$ is an $(\frak{L}(V), \theta)$-module,
that is,  we have $\widetilde\theta(\ell .\mu) = \theta(\ell).
\widetilde\theta(\mu)$ for $\ell \in \frak{L}(V)$ and $\mu \in
\frak{L}(M)$.
\end{lemma}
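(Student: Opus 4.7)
The plan is to verify the intertwining identity $\widetilde\theta(\ell.\mu) = \theta(\ell).\widetilde\theta(\mu)$ by a direct component-wise calculation, taking $\ell = (a,h,b) \in \frak L(V)$ and $\mu = (m,\vp,n) \in \frak L(M)$.  First I would apply (\ref{eq:0418151}) and then $\widetilde\theta$ to obtain
\[
\widetilde\theta(\ell.\mu) = (\vp^\natural b - h^\natural n,\; -([h,\vp] + a\bo n - m\bo b)^\natural,\; hm - \vp a).
\]
For the other side I would substitute $\theta(\ell) = (b,-h^\natural,a)$ and $\widetilde\theta(\mu) = (n,-\vp^\natural,m)$ back into the same action formula and simplify, using $(h^\natural)^\natural = h$ and $(\vp^\natural)^\natural = \vp$.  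The first and third components then match the corresponding entries above with no further work.

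The entire verification therefore collapses to the middle component.  Using $(a\bo n)^\natural = n\bo a$ and $(m\bo b)^\natural = b\bo m$ on the left, and $[-h^\natural,-\vp^\natural] = [h^\natural,\vp^\natural]$ on the right, agreement of the middle components is equivalent to the single identity
\[
[h,\vp]^\natural = [\vp^\natural, h^\natural] \qquad (h \in V_0,\ \vp \in M_0),
\]
which says that $\natural$ is an anti-homomorphism for the Lie bracket between $V_0$ and $M_0$.  By linearity I can reduce to generators $h = a\bo b$ and $\vp$ of the form $u\bo m$ or $m\bo u$.  For $\vp = u\bo m$, applying $\natural$ term-by-term to the paper's identity $[a\bo b,u\bo m] = \tp{abu}\bo m - u\bo\tp{mab}$ gives $m\bo\tp{abu} - \tp{mab}\bo u$; on the other side $[\vp^\natural,h^\natural] = [m\bo u,b\bo a]$ unfolds as $-[b\bo a, m\bo u]$ via the companion box-operator commutation rule, and after invoking the outer-variable symmetry $\tpp{x}{y}{z} = \tpp{z}{y}{x}$ I recover exactly the same expression.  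The case $\vp = m\bo u$ is handled by the symmetric argument, and (\ref{sharp}) ensures that the conclusion extends unambiguously by linearity to all of $M_0$.

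The main obstacle is precisely this anti-homomorphism property of $\natural$: the first and third components match almost formally, but verifying the middle one genuinely requires both the explicit box-operator commutation rules (which themselves encode axiom (iii) of Definition~\ref{mo}) and the outer-variable symmetry of the Jordan triple product.  Once that key identity is established, the lemma follows immediately.
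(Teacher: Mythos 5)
Your verification is correct: the paper states this lemma without proof, and your direct componentwise check is precisely the routine verification the authors leave to the reader. The reduction of the whole statement to the single identity $[h,\vp]^\natural=[\vp^\natural,h^\natural]$ on generators is the right key step, and your computation of it via the box-operator commutation rules $[a\bo b,u\bo m]=\{a,b,u\}\bo m-u\bo\{m,a,b\}$ together with the outer symmetry $\{a,b,c\}_1=\{c,b,a\}_3$ and the well-definedness of $\natural$ from (\ref{sharp}) is sound.
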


Let $\frak{k}(V) = \{(v,h,v)\in \frak{L}(V): h^\natural=-h\}$ be
the $1$-eigenspace of the involution $\theta$ (see
\cite[p.48]{chu}), which is a real Lie subalgebra of
$\frak{L}(V)$, and let $\frak{k}(M) = \{(m, \vp,m)\in \frak{L}(M):
\vp = -\vp^\natural\}$ be the $1$-eigenspace of $\widetilde
\theta$. Then $\frak{k}(M)$ is a Lie module over the Lie algebra
$\frak{k}(V)$. We will construct cohomology groups of a Jordan
triple $V$ with coefficients in a $V$-module $M$ using the
cohomology groups of $\frak{L}(V)$ with coefficients
$\frak{L}(M)$. For a real Jordan triple $V$, one can also make use of
the cohomology groups of the real Lie algebra
$\frak{k}(V)$ with coefficients $\frak{k}(M)$.\\

Let  $V$ be a Jordan triple. As usual, $V$ is identified as the
subspace
$$\{(v,0,0): v \in V\}$$
of the TKK Lie algebra $\frak{L}(V).$
For a triple $V$-module $M$, there is a natural embedding of $M$
into $\frak{L}(M) = M \oplus M_0 \oplus M$ given by
$$\iota: m\in M \mapsto  (m,0,0) \in \frak{L}(M)$$ and we will
identify $M$ with $\iota(M)$. We denote by $\iota_p :
\frak{L}(M)\rightarrow \iota(M)$ the natural projection
$$\iota_p(m, \vp, n) = (m,0,0).$$

We define $A^0(V,M) = M$ and for $k=1,2,\ldots$, we denote by
$A^k(V,M)$ the vector space of all alternating $k$-linear maps
$\omega:V^k = \overbrace{V \times \cdots \times V}^{k-{\rm times}}
\rightarrow M$.

Given $m \in M$, we define
$$\frak {L}_0(m) = (m, 0, 0) \in \frak{L}(M)$$
and view $\frak{L}_0(m)$ as an extension of $m\in A^0(V,M)$ to an
element in $A^0(\frak{L}(V), \frak{L}(M)) = \frak{L}(M)$.

To motivate the  definition of an extension $\frak{L}_k(\omega)\in
A^k(\frak{L}(V), \frak{L}(M))$ of an element $\omega \in
A^k(V,M)$, for $k\ge 1$, we first consider the case  $k=1$ and
note that $\omega \in A^1(V,M)$  is a Jordan triple derivation
 if
and only if
$$\omega \circ(a\bo b) - (a \bo b)\circ \omega =\omega(a)\bo b+a\bo \omega (b).$$ Let us call
a linear transformation $\omega:V\rightarrow M$ {\it extendable}
if the following condition holds:
$$
\sum_i a_i\bo b_i=0\Rightarrow \sum_i(\omega(a_i)\bo b_i+a_i\bo \omega(b_i))=0.
$$
Thus a Jordan triple derivation is extendable, and if $\omega$ is
any extendable transformation in $A^1(V,M)$, then the map
\[
\frak{L}_1(\omega) (x_1 \oplus a_1\bo b_1 \oplus y_1)
:= (\, \omega(x_1),\,  \omega(a_1)\bo b_1 + a_1\bo  \omega(b_1),\,\,\omega
(y_1)\,)
\]
is well defined and extends linearly to an element
$\frak{L}_1(\omega)\in A^1(\frak{L}(V),\frak{L}(M))$, in which
case we call $\frak{L}_1(\omega)$ the {\it Lie extension} of
$\omega$ on the Lie algebra $\frak{L}(V)$.

Now for $k>1$,  given a $k$-linear mapping $\omega : V^k
\rightarrow M$, we say that $\omega$ is {\it extendable} if it
satisfies the following condition under the assumption $\sum_i
u_i\bo v_i=0$:
$$
\sum_i\left(\omega(u_i,a_2,\ldots,a_k)\bo (v_i+b_2+\cdots +b_k)+
(u_i+a_2+\cdots+a_k)\bo \omega(v_i,b_2,\ldots, b_k)\right)=0,
$$
for all $a_2,\ldots,a_k,b_2,\ldots, b_k\in V$.

For an extendable $\omega$,  we can unambiguously define a
$k$-linear map  $\frak{L}_k (\omega): \frak{L}(V)^k \rightarrow
\frak{L}(M)$ as the linear extension (in each variable) of
\begin{eqnarray}\label{eq:1015142}
&&\frak{L}_k(\omega) (x_1 \oplus a_1\bo b_1 \oplus y_1,\, x_2
\oplus
a_2\bo b_2 \oplus y_2, \,\cdots, \,x_k \oplus a_k\bo b_k \oplus y_k)\\
&=& (\, \omega(x_1, \ldots, x_k),\,\, \sum_{j=1}^k \omega(a_1,
\ldots, a_k)\bo b_j +  \sum_{j=1}^k a_j\bo  \omega(b_1, \ldots, b_k),\,\,\omega
(y_1, \ldots, y_k)\,).\nonumber
\end{eqnarray}
 We call $\frak{L}_k(\omega)$ the {\it Lie extension} of $\omega$
 and often omit the subscript $k$ if no
confusion is likely. The following lemma is easy to verify.

\begin{lemma} Given an extendable $\omega \in A^{k}(V,M)$, we have $\frak{L}_k(\omega)\in
A^k(\frak{L}(V), \frak{L}(M))$. Moreover,  $\frak{L}_k(\omega)\in
A^k_\theta(\frak{L}(V), \frak{L}(M))$ if and only if  $k$ is odd.
\end{lemma}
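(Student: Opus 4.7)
The plan is to verify the three assertions in turn: well-definedness of $\frak L_k(\omega)$ on $\frak L(V)^k$, its alternating $k$-linearity, and the equivalence of $\theta$-invariance with the parity of $k$.

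For well-definedness, each input of $\frak L_k(\omega)$ has the form $x_j \oplus h_j \oplus y_j$ with $h_j \in V_0$ represented, non-uniquely, as $\sum_i a_{i,j}\bo b_{i,j}$. Expanding (\ref{eq:1015142}) by multilinearity separately in each $h_j$, the ambiguity reduces to the case $\sum_i a_{i,j}\bo b_{i,j}=0$ with all other arguments fixed; the resulting contribution to the middle slot is then exactly the quantity that vanishes by the extendability hypothesis, while the outer slots depend only on the $x_j$'s and $y_j$'s. Thus $\frak L_k(\omega)$ is unambiguously defined, and $k$-linearity is read off directly from the formula.

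For the alternating property, I would swap the $\ell$-th and $(\ell{+}1)$-th inputs. The outer entries $\omega(x_1,\ldots,x_k)$ and $\omega(y_1,\ldots,y_k)$ pick up a sign because $\omega$ is alternating. In the middle slot, the swap simultaneously permutes the pairs $(a_j,b_j)$; the factors $\omega(a_1,\ldots,a_k)$ and $\omega(b_1,\ldots,b_k)$ each change sign by the alternating property of $\omega$, while $\sum_j a_j$ and $\sum_j b_j$ are unaffected, so the middle slot also changes sign.

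The substantive step is the $\theta$-invariance equivalence. Applying $\theta$ to each input turns $(x_j, a_j\bo b_j, y_j)$ into $(y_j, (-b_j)\bo a_j, x_j)$. Substituting into (\ref{eq:1015142}) and using the $k$-linearity of $\omega$ (which gives $\omega(-b_1,\ldots,-b_k)=(-1)^k\omega(b_1,\ldots,b_k)$) yields
\[
\Bigl(\omega(y_1,\ldots,y_k),\ (-1)^k \sum_j \omega(b_1,\ldots,b_k)\bo a_j - \sum_j b_j\bo \omega(a_1,\ldots,a_k),\ \omega(x_1,\ldots,x_k)\Bigr).
\]
On the other side, $\widetilde\theta$ applied to $\frak L_k(\omega)$ evaluated at the original inputs has the same outer entries and middle entry $-\sum_j b_j\bo\omega(a_1,\ldots,a_k)-\sum_j \omega(b_1,\ldots,b_k)\bo a_j$, computed from $\widetilde\theta(m,\vp,n)=(n,-\vp^\natural,m)$ together with $(u\bo m)^\natural = m\bo u$ and $(m\bo u)^\natural = u\bo m$. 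Comparing middle entries, $\theta$-invariance holds iff $(-1)^k=-1$, i.e.\ iff $k$ is odd. The only mild obstacle is tracking the signs correctly in this last comparison; everything else is routine bookkeeping that uses only the extendability hypothesis and the definitions.
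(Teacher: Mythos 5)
Your verification is correct and is precisely the routine check the paper intends — it states this lemma without proof (``easy to verify''), and your argument (extendability for well-definedness slot by slot, the alternating property read off from the middle-slot factorization $\omega(a_1,\dots,a_k)\bo\sum_j b_j+\sum_j a_j\bo\omega(b_1,\dots,b_k)$, and the comparison of middle entries reducing $\theta$-invariance to $((-1)^k+1)\,\omega(b_1,\dots,b_k)\bo\bigl(\sum_j a_j\bigr)=0$) is exactly the computation one would write out. The only pedantic caveat, which the paper's own statement also glosses over, is that the ``only if'' direction requires $\omega\neq 0$ together with the standing nondegeneracy of $M$ to produce a choice of $a_j,b_j$ with $\omega(b_1,\dots,b_k)\bo a_j\neq 0$.
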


This lemma enables us to define the following extension map on the
subspace $A^k(V,M)'$ of extendable maps in $A^k(V,M)$:
$$\frak{L}_k:
\omega \in A^{k}(V,M)' \mapsto \frak{L}_k(\omega) \in
A^k(\frak{L}(V), \frak{L}(M)).$$

Conversely, given $\psi \in A^{k}(\frak{L}(V), \frak{L}(M))$ for
 $k=1,2, \ldots$, one can define an alternating  map
$$J^k(\psi): V^{k} \rightarrow M$$ by
$$J^k(\psi)(x_1, \ldots,  x_{k}) = \iota_p
 \psi(\,(x_1,0,0),\, \ldots\,,   (x_{k}, 0, 0)\,)
$$
for $(x_1, \ldots,x_k) \in V^k$. We define $J^0: \frak{L}(M) \rightarrow \iota(M) \approx M
= A^0(V,M)$ by
$$J^0 (m, \vp, n) = (m,0,0) \qquad ((m,\vp,n) \in \frak{L}(M)).$$
We call $J^k(\psi)$ the {\it Jordan restriction}  of $\psi$ in
$A^k(V,M)$ and sometimes write $J$ for $J^k$ if the index $k$ is
understood.

\begin{example}\label{example:inner} Given a map  $\psi \in A^{k}_\theta(\frak{L}(V), \frak{L}(M))$,
we need not have $\psi((x_1,0,0), \ldots, (x_k,0,0)) \in \iota (M)$. Consider the inner derivation
${\rm ad} (m,\vp,m) \in A^{1}_\theta(\frak{L}(V), \frak{L}(M))$ defined by
$${\rm ad} (m,\vp,m)(x \oplus a\bo b\oplus y) = (x \oplus a\bo b\oplus y).
(m,\vp,m).$$
For $x\in V$, we have
$$ {\rm ad}(m,\vp,m)(x,0,0) =(x,0,0)\cdot(m,\varphi,m)= (-\vp(x), x\bo m, 0)
\notin \iota(M).$$
\end{example}

With the identification of $M$ and $\iota(M)$, the map $$J^k: \psi
\in A^{k}(\frak{L}(V), \frak{L}(M)) \mapsto A^k(V, M)$$ can be
viewed as the left inverse of $\frak{L}_k : A^k(V,M)' \rightarrow
A^{k}(\frak{L}(V), \frak{L}(M))$ since for an extendable $\omega$,
we have
\begin{eqnarray*}
J^k\frak{L}_k(\omega)(x_1, \ldots, x_k) &=& \iota_p \frak{L}(\omega)(
(x_1,0,0),\, \ldots\,,   (x_{k}, 0, 0))\\
&=&\omega(x_1, \ldots, x_k).
\end{eqnarray*}

We can now define
the cohomology groups for a Jordan triple $V$ with coefficients $M$ by means
of the cochain complexes for the Lie algebra $\frak{L}(V)$ and the involutive
Lie algebra $(\frak{L}(V),\theta)$:

$$\begin{array}{cccccccc} \frak{L}(M)= A^0(\frak{L}(V), \frak{L}(M)) &
\longrightarrow ^{\mbox{\hspace{-.2in} $^{d_0}$}} &
 A^1(\frak{L}(V), \frak{L}(M)) & \longrightarrow
^{\mbox{\hspace{-.2in} $^{d_1}$}} &
A^2(\frak{L}(V), \frak{L}(M)) & \longrightarrow
^{\mbox{\hspace{-.2in} $^{d_2}$}}& \cdots\\\
\\
\downarrow {J^0}
 &
&\downarrow {J^1}&&
\downarrow {J^2}&&\cdots&
\\\\
M= A^0(V,M)& & A^1(V,M) &&
A^2(V,M)&& \cdots &
\end{array}
$$
$$ $$
$$\begin{array}{cccccccc}  A_\theta^0(\frak{L}(V), \frak{L}(M)) &
\longrightarrow ^{\mbox{\hspace{-.2in} $^{d_0}$}} &
 A_\theta^1(\frak{L}(V), \frak{L}(M)) & \longrightarrow
^{\mbox{\hspace{-.2in} $^{d_1}$}} &
A_\theta^2(\frak{L}(V), \frak{L}(M)) & \longrightarrow
^{\mbox{\hspace{-.2in} $^{d_2}$}}& \cdots\\\
\\
\downarrow {J^0}
 &
&\downarrow {J^1}&&
\downarrow {J^2}&&\cdots&
\\\\
M= A^0(V,M)& & A^1(V,M) &&
A^2(V,M)&& \cdots &
\end{array}
$$
For $k=0,1,2, \ldots$, the {\it $k$-th cohomology groups} $H^k(V,M)$ are defined by
\begin{eqnarray*}
H^0(V,M) &=&  J^0(\ker d_0) = J^0\{(m,\vp,n): (u,h,v).(m,\vp,n)=0,\,\forall (u,h,v)\in \frak{L}(V)\}\\
&=& \{m\in M:  m \bo v =0, \, \forall v\in V\} = \{0\}
\end{eqnarray*} and
$$H^k(V,M) = Z^k(V,M)/B^{k}(V,M) \qquad (k=1,2,\ldots)$$
where $$Z^k(V,M) = J^k(Z^k(\frak{L}(V),\frak{L}(M))), \quad Z^k(\frak{L}(V),\frak{L}(M))
= \ker d_k$$
and
$$B^{k}(V,M) =  J^k(B^k(\frak{L}(V),\frak{L}(M))), \quad B^k(\frak{L}(V),\frak{L}(M))
={\rm im}\, d_{k-1}.$$

For $k=0,1,2 \ldots$, the {\it $k$-th involutive cohomology groups} $H^k_\theta(V,M)$ are defined by
$$H^0(V,M) = J^0(\ker d_0) = \{0\}$$ and
$$H^k_\theta(V,M) = Z^k_\theta(V,M)/B^{k}_\theta(V,M) \qquad (k=1,2,\ldots)$$
where $$Z^k_\theta(V,M) = J^k(Z^k_\theta(\frak{L}(V),\frak{L}(M))), \quad
Z^k_\theta(\frak{L}(V),\frak{L}(M))) = \ker d_k|_{A_\theta^k(\frak{L}(V), \frak{L}(M))}$$
and
$$B^{k}_\theta(V,M) =  J^k(B^k_\theta(\frak{L}(V),\frak{L}(M))), \quad
B^k_\theta(\frak{L}(V),\frak{L}(M)) = d_{k-1} (A_\theta^{k-1}(\frak{L}(V), \frak{L}(M))).$$

We see that the map $$\omega + B^{k}_\theta(V,M) \in H^k_\theta(V,M)
\mapsto \omega + B^{k} (V,M) \in H^k(V,M) $$
identifies $H^k_\theta(V,M) $ as a subgroup of $H^k(V,M) $. We call elements in $H^k(V,M)$ the {\it Jordan triple
$k$-cocycles}, and the ones in $H^k_\theta(V,M) $ the {\it involutive Jordan triple
$k$-cocycles}. Customarily, elements in $B^k(V,M)$ are called the {\it coboundaries}.

\subsection{Triple derivations}
\begin{definition} Let $V$ be a Jordan triple and $M$ a triple $V$-module. A mapping
$\omega : V \rightarrow M$ is called an {\it inner triple derivation} if it is
of the form
$$\omega = \sum_{i=1}^k (m_i \bo v_i - v_i \bo m_i) \in M_0$$
for some $m_1, \ldots, m_k \in M$
and $v_1, \ldots, v_k\in V$.
Note that $\omega^\natural=-\omega$ and $(0,\omega,0)\in\frak{k}(M)$.
\end{definition}

Let us compute the first involutive cohomology group $H^1_\theta(V,M) = Z^1_\theta(V,M)/B^{1}_\theta(V,M)$.  First,
we show that $B^{1}_\theta(V,M)$ coincides with the space of inner triple derivations from $V$ to $M$.

Let $\omega$ be an  inner triple drivation on $V$. We show that its Lie extension
$\frak{L}(\omega)$ is a Lie inner derivation on the Lie algebra
$\frak{L}(V)$. Indeed, we have
\begin{eqnarray*}
\frak{L}(\omega) (x \oplus a\bo b\oplus y) &=&
(\omega(x),\, \omega(a)\bo b+ a\bo \omega(b),\, \omega(y))\\
&=&(x \oplus a\bo b\oplus x). (0, -\omega,0).
\end{eqnarray*}
Hence $\omega = J^1(\frak{L}_1(\omega)) \in B^{1}_\theta(V,M)$,
where $(0,-\omega,0)\in\frak{k}(M)$. Conversely, let $\psi = {\rm
ad}(m, \vp, n) \in A_\theta^1(\frak{L}(V), \frak{L}(M))$ be a Lie
inner derivation. Then for $x\in V$, we have
\begin{eqnarray*}
J^1(\psi)(x)&=& \iota_p\psi(x,0,0) = \iota_p(x,0,0).(m, \vp,n)\\& =& \iota_p(-\vp(x), x\bo n,0)
=-\vp(x),
\end{eqnarray*}
where $\widetilde \theta (\vp) = \vp$ implies that $\vp : V \rightarrow M$ is an inner triple derivation.

We now show that $Z^1_\theta(V,M)$ coincides with the set of  triple derivations of $V$.

\begin{lemma}\label{lem:0922141}
Let $\omega : V \rightarrow M$ be a triple derivation. Then
$\frak{L}(\omega): \frak{L}(V) \rightarrow \frak{L}(M)$
is a $\theta$-invariant Lie derivation.
\end{lemma}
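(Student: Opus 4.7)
The plan is to verify two properties of $D:=\frak{L}(\omega)$: first, $\theta$-invariance, meaning $D\circ\theta=\widetilde\theta\circ D$; second, the Lie derivation identity $D[\ell_1,\ell_2]=\ell_1.D(\ell_2)-\ell_2.D(\ell_1)$ for all $\ell_1,\ell_2\in\frak{L}(V)$. As a preliminary, I would confirm that $\omega$ is extendable, so that $D$ is well defined by (\ref{eq:1015142}): if $\sum_i a_i\bo b_i=0$ then $\sum_i\{a_i,b_i,c\}=0$ for every $c\in V$, whence applying $\omega$ and using the triple derivation law gives $\sum_i\bigl(\omega(a_i)\bo b_i+a_i\bo \omega(b_i)\bigr)(c)=0$ for every $c\in V$, which is extendability.

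For $\theta$-invariance, a short direct computation on $(x,a\bo b,y)$ using $\theta(x,a\bo b,y)=(y,-b\bo a,x)$ and $\widetilde\theta(m,\vp,n)=(n,-\vp^\natural,m)$ shows that both $D(\theta(x,a\bo b,y))$ and $\widetilde\theta(D(x,a\bo b,y))$ reduce to $(\omega(y),\,-\omega(b)\bo a - b\bo\omega(a),\,\omega(x))$; this extends by linearity to all of $\frak{L}(V)$.

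For the Lie derivation identity, by bilinearity it suffices to check it on pairs $(\ell_1,\ell_2)$ in which each $\ell_i$ lies in one of the three summands $V$, $V_0$, or the second copy of $V$ (call it $V^+$) of $\frak{L}(V)=V\oplus V_0\oplus V$. Antisymmetry of the bracket reduces this to six cases: $(V,V)$, $(V,V_0)$, $(V,V^+)$, $(V_0,V_0)$, $(V_0,V^+)$, and $(V^+,V^+)$. In each case, one computes $[\ell_1,\ell_2]$ using (\ref{eq:0418152}), applies $D$ by its defining formula, and evaluates $\ell_i.D(\ell_j)$ using (\ref{eq:0418151}). Cases involving at least one copy of $V$ or $V^+$ collapse to one application of the triple derivation law $\omega\{a,b,c\}=\{\omega(a),b,c\}+\{a,\omega(b),c\}+\{a,b,\omega(c)\}$; the $(V_0,V_0)$ case collapses to term-by-term application of the box-operator commutator formulas recorded just after Definition~\ref{mo}.

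The principal obstacle is the bookkeeping in the $(V_0,V_0)$ case, where one writes $h_1=\sum_i a_i\bo b_i$ and $h_2=\sum_j c_j\bo d_j$, applies $D$ summand-by-summand via $D(a\bo b)=\omega(a)\bo b+a\bo\omega(b)$, and then verifies that the result matches $h_1.D(h_2)-h_2.D(h_1)$ component by component in $\frak{L}(M)=M\oplus M_0\oplus M$. Each individual summand is governed by a single application of the Jordan triple identity (with one entry replaced by its $\omega$-image), so the verification is mechanical once the notation is set up carefully.
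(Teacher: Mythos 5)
Your proposal is correct and follows essentially the same route as the paper: the paper also notes $\theta$-invariance is immediate from the formula for $D$ and then verifies the derivation identity by splitting the bracket into cases (it uses three cases, treating $(x,0,y)$ as a single unit rather than separating the two outer copies of $V$ into your six), each handled by direct computation with the triple derivation law, the module box-operator commutator identities, and the main Jordan triple identity. Your explicit check that a triple derivation is extendable (so $\frak{L}_1(\omega)$ is well defined) is a point the paper disposes of just before Lemma 4.4 by the same argument.
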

\begin{proof}
For notation's sake we denote $\frak{L}(\omega)$ by $D$.  Thus
$$
D(x,a\bo b,y)=(\omega(x),\omega(a)\bo b+a\bo\omega(b),\omega(y)),
$$
and it is clear that $D$ is $\theta$-invariant.
We need to verify
$$
D[(x,a\bo b,y),(u,c\bo d,v)]=(x,a\bo b,y)\cdot D(u,c\bo d,v)-(u,c\bo d,v)\cdot D(x,a\bo b,y).
$$
for $(x,a\bo b,y),(u,c\bo d,v)\in \frak{L}(V)$.
By writing
\begin{eqnarray*}
D[(x,a\bo b,y),(u,c\bo d,v)]&=&D[(x,0,y),(u,0,v)]+D[(0,a\bo b,0),(u,0,v)]\\
&+&D[(0,a\bo b,0),(0,c\bo d,0)]+D[(x,0,y),(0,c\bo d,0)],
\end{eqnarray*}
we only need to verify the three identities
\begin{equation}\label{eq:0917141}
D[(x,0,y),(u,0,v)]=(x,0,y)\cdot D(u,0,v)-(u,0,v)\cdot D(x,0,y),
\end{equation}
\begin{equation}\label{eq:0917142}
D[(0,a\bo b),(u,0,v)]=(0,a\bo b,0)\cdot D(u,0,v)-(u,0,v)\cdot D(0,a\bo b,0),
\end{equation}
and
\begin{equation}\label{eq:0917143}
D[(0,a\bo b,0),(0,c\bo d,0)]=(0,a\bo b,0)\cdot D(0,c\bo d,0)-(0,c\bo d,0)\cdot D(0,a\bo b,0).
\end{equation}
These are easy consequences of the definitions.  For completeness we include details.
The left side of  (\ref{eq:0917141}) is
\begin{eqnarray*}
D[(x,0,y),(u,0,v)]&=&D(0,x\bo v-u\bo y,0)\\
&=&(0,\omega(x)\bo v+x\bo \omega(v)-\omega(u)\bo y-u\bo\omega(y),0),
\end{eqnarray*}
and the right side is
\begin{eqnarray*}
\lefteqn{(x,0,y)\cdot D(u,0,v)-(u,0,v)\cdot D(x,0,y)=}\\
&&(x,0,y)\cdot(\omega(u),0,\omega(v))-(u,0,v)\cdot(\omega(x),0,\omega(y))\\
&=&(0,x\bo \omega(v)-\omega(u)\bo y,0)-(0,u\bo\omega(y)-\omega(x)\bo v,0),
\end{eqnarray*}
proving
(\ref{eq:0917141}).
The left side of  (\ref{eq:0917142}) is
\begin{eqnarray*}
D[(0,a\bo b,0),(u,0,v)]&=&D(\tp{abu},0,-\tp{bav})=(\omega\tp{abu},0,-\omega \tp{bav})\\
\end{eqnarray*}
and the right side is
\begin{eqnarray*}
\lefteqn{(0,a\bo b,0)\cdot D(u,0,v)-(u,0,v)\cdot D(0,a\bo b,0)=}\\
&&(0,a\bo b,0)\cdot(\omega(u),0,\omega(v))-(u,0,v)\cdot(0,\omega(a)\bo b+a\bo\omega(b),0)\\
&=&(\tp{ab\omega(u)},0,-\tp{ba\omega(v)})-(\tp{-\omega(a)bu}-\tp{a\omega(b)u},0,\tp{b\omega(a)v}+\tp{\omega(b)av}),
\end{eqnarray*}
proving
(\ref{eq:0917142}).
The left side of  (\ref{eq:0917143}) is
\begin{eqnarray*}
D[(0,a\bo b,0),(0,c\bo d,0)]&=&D(0,[a\bo b,c\bo d],0)=D(0,\tp{abc}\bo d-c\bo\tp{dab},0)\\
&=&(0,\omega\tp{abc}\bo d+\tp{abc}\bo \omega(d)-\omega(c)\bo\tp{dab}-c\bo\omega\tp{dab},0)\\
&=&(0,\tp{\omega(a)bc}\bo d+\tp{a\omega(b)c}\bo d+\tp{ab\omega(c)}\bo d+\tp{abc}\bo \omega(d)\\
&&-\omega(c)\bo\tp{dab}-c\bo\tp{\omega(d)ab}-c\bo \tp{d\omega(a)b}-c\bo\tp{da\omega(b)},0),
\end{eqnarray*}
and the right side is
\begin{eqnarray*}
\lefteqn{(0,a\bo b,0)\cdot D(0,c\bo d,0)-(0,c\bo d,0)\cdot D(0,a\bo b,0)}\\
&=&(0,a\bo b,0)\cdot(0,\omega(c)\bo d+c\bo\omega(d),0)-(0,c\bo d,0)\cdot(0,\omega(a)\bo b+a\bo\omega(b),0)\\
&=&(0,[a\bo b,\omega(c)\bo d]+[a\bo b,c\bo\omega(d)]-[c\bo d,\omega(a)\bo b]-[c\bo d,a\bo\omega(b)],0)\\
&=&(0,[a\bo b,\omega(c)\bo d]+[a\bo b,c\bo\omega(d)]+[\omega(a)\bo b,c\bo d]+[a\bo \omega(b),c\bo d],0\\
&=&(0,\tp{ab\omega(c)}\bo d-\omega(c)\bo\tp{dab}+\tp{abc}\bo\omega(d)-c\bo\tp{\omega(d)ab}\\
&&+\tp{\omega(a)bc}\bo d-c\bo \tp{d\omega(a)b}+\tp{a\omega(b)c}\bo d-c\bo\tp{da\omega(b)},0)
\end{eqnarray*}
proving
(\ref{eq:0917143}).\end{proof}

The previous lemma shows that all triple derivations $\omega$ on
$V$ are contained in $Z^1_\theta(V,M)$.   Conversely, given a Lie
derivation $\psi \in A_\theta^1(\frak{L}(V), \frak{L}(M))$, we
show below that $J(\psi)$ is a triple derivation on $V$. This
shows that every element in $Z_\theta^1(V,M)$ is a triple
derivation and hence $H^1_\theta(V,M)$ is  the space of triple
derivations modulo the inner triple derivations of $V$ into $M$.
This will be generalized in the next subsection.

\subsection{Structural Transformations}
A (conjugate-) linear transformation $S:V\rightarrow M$ is said to be a {\it
structural transformation} if there exists a (conjugate-) linear transformation
$S^*:V\rightarrow M$ such that
\[
S\tp{xyx}+\tp{x(S^*y)x}=\tp{xySx}
\]
and
\[
S^*\tp{xyx}+\tp{x(Sy)x}=\tp{xyS^*x}.
\]
A triple derivation $D$ is a special case of a structural transformation with $D^*=-D$.  By polarization,
this property is equivalent to
\[
S\tp{xyz}+\tp{x(S^*y)z}=\tp{zySx}+\tp{xySz}
\]
and
\[
S^*\tp{xyz}+\tp{x(Sy)z}=\tp{zyS^*x}+\tp{xyS^*z}.
\]

As noted earlier, the space of {\it inner structural
transformations} coincides, by definition, with the space $M_0$. Triple derivations which are inner structural transformations are inner triple derivations. Also,  if $\omega$ is a
structural transformation, then $\omega-\omega^*$ is a triple
derivation and if $\omega$ is a triple derivation, then $i\omega$ is a structural transformation which is inner if $\omega$ is inner.

\begin{proposition}\label{prop:1015141}
Let $\psi$ be a Lie
derivation of $\frak{L}(V)$ into $\frak{L}(M)$.  Then
\begin{enumerate}
\item[\rm(i)]
$J(\psi): V \rightarrow M$ is a structural transformation with $(J\psi)^*=-J\psi'$ where $\psi'=\widetilde\theta\psi\theta$.
\item[\rm(ii)]  If $\psi$ is $\theta$-invariant, then $\psi'=\psi$ and  $J\psi$ is a  triple derivation.
\item[\rm(iii)] If $\psi$ is an inner derivation then $J\psi$ is an inner structural transformation.
In particular, if $\psi$ is a $\theta$-invariant inner derivation then $J\psi$ is an inner triple derivation.
\end{enumerate}
Conversely, let $\omega$ be a structural transformation.
\begin{enumerate}
\item[\rm(iv)] The mapping $D=\frac{1}{2}\frak{L}_1(\omega-\omega^*):\frak{L}(V)\rightarrow \frak{L}(M)$ defined by
\[
D(x,a\bo b,y)=\frac{1}{2}(\omega(x)-\omega^*(x),\omega(a)\bo b-a\bo \omega^*(b)-\omega^*(a)\bo b+a\bo\omega(b),\omega(y)-\omega^*(y))
\]
is a derivation of the Lie algebra $\frak{L}(V)$ into $\frak{L}(M)$.
\item[\rm(v)]
$D$ is $\theta$-invariant if and only if $\omega$ is a triple derivation, that is, $\omega^*=-\omega$.
\item[\rm(vi)] If $\omega$ is an inner structural transformation then $D$ is an inner derivation.
In particular, if $\omega$ is an inner triple derivation then $D$ is a $\theta$-invariant  inner  derivation.
\end{enumerate}

\end{proposition}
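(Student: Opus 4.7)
The plan is to translate between Lie derivations and structural transformations via the fundamental TKK identity $\{x,y,z\} = [[x,\theta(y)],z]$ together with the action formula (\ref{eq:0418151}), and to invoke Lemma~\ref{lem:0922141} for the converse direction.

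For parts (i)--(iii), write $\psi(x,0,0) = (Sx, \varphi_x, Tx)$ and $\psi(0,0,y) = (\tilde Sy, \tilde\varphi_y, \tilde Ty)$ with linear maps $S,T,\tilde S,\tilde T\colon V\to M$. By the definition of $J$ we have $J\psi(x) = Sx$, while $\psi'(x,0,0) = \widetilde\theta\psi(0,0,x) = (\tilde Tx, -\tilde\varphi_x^\natural, \tilde Sx)$ gives $J\psi'(x) = \tilde Tx$. Thus (i) reduces to showing that $S$ is a structural transformation with $S^* = -\tilde T$. To establish this, apply the Lie derivation rule $\psi[A,B] = A.\psi B - B.\psi A$ twice to the identity
\[
(\{x,y,z\},0,0) = [(0,x\bo y,0),(z,0,0)] = [[(x,0,0),\theta(y,0,0)],(z,0,0)],
\]
expand each term via (\ref{eq:0418151}), and take the first-component projection $J$. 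The result collapses after routine cancellation to
\[
S\{x,y,z\} + \{x,-\tilde Ty,z\} = \{Sx,y,z\} + \{x,y,Sz\},
\]
which is precisely one of the two polarized structural transformation identities with adjoint $-\tilde T$, using the outer symmetry $\{Sx,y,z\} = \{z,y,Sx\}$. The companion identity follows by applying the same argument to $\psi' = \widetilde\theta\psi\theta$; one checks directly that $\psi'$ is a Lie derivation from the fact that $\theta$ is a Lie involution and $\widetilde\theta$ intertwines the action as required by the $(\frak{L}(V),\theta)$-module condition. For (ii), $\theta$-invariance $\widetilde\theta\psi = \psi\theta$ forces $\tilde T = S$, so $S^* = -S$ and $J\psi$ is a triple derivation. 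For (iii), the action formula applied to $\psi = \mathrm{ad}(m,\varphi,n)$ yields $J\psi(x) = -\varphi(x)$, and since $-\varphi \in M_0$ by construction it is an inner structural transformation; when $(m,\varphi,n)\in\frak{k}(M)$, the constraint $\varphi^\natural = -\varphi$ forces the skew form that characterizes an inner triple derivation.

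For (iv)--(vi), the decisive observation is that $\tfrac{1}{2}(\omega-\omega^*)$ is a triple derivation whenever $\omega$ is structural (a fact recorded in the paper), and that the $D$ defined in (iv) is nothing but the Lie extension $\frak{L}_1\bigl(\tfrac{1}{2}(\omega-\omega^*)\bigr)$. Consequently (iv), together with the $\theta$-invariance half of (v), follows immediately from Lemma~\ref{lem:0922141}. The reverse implication in (v) is verified by comparing $\widetilde\theta D(x,a\bo b,y)$ with $D\theta(x,a\bo b,y) = D(y,-b\bo a,x)$ componentwise, using the $\natural$-rule on the middle component to read off the required identity $\omega^* = -\omega$. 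For (vi), an inner structural transformation $\omega = \sum(u_i\bo m_i + n_j\bo v_j) \in M_0$ produces a $D$ that coincides with $\mathrm{ad}(\xi)$ for an explicit element $\xi \in \frak{L}(M)$ assembled from the defining data; in the inner-triple-derivation case $\xi$ lies in the $1$-eigenspace $\frak{k}(M)$ of $\widetilde\theta$.

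The main obstacle is the bookkeeping in part (i): the double application of the Lie derivation identity to the nested commutator expands into many terms across all three components of $\frak{L}(M)$, and one must patiently combine (\ref{eq:0418151}) with the definition of $\natural$ on $M_0$ to see that the first-component projection really does collapse to a single polarized structural transformation identity pairing $S$ with $-\tilde T$. Once that identification is secure, everything else is symmetry or routine verification.
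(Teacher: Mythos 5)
Your proposal follows essentially the same route as the paper: for (i)--(iii) you apply the Lie derivation rule twice to $(\{x,y,z\},0,0)=[[(x,0,0),\theta(y,0,0)],(z,0,0)]$, project onto the first component, and identify the adjoint as $-J\psi'$ via $\psi'=\widetilde\theta\psi\theta$ (this is exactly the paper's derivation of (\ref{eq:0919141}), including the use of the companion identity for $\psi'$), while for (iv)--(vi) you reduce to Lemma~\ref{lem:0922141} and the explicit $\mathrm{ad}$ computation for inner (triple) derivations, again as the paper does. The only caveat, which you share with the paper's own one-line treatment of (v), is that the componentwise comparison of $\widetilde\theta D$ with $D\theta$ for $D=\frac{1}{2}\frak{L}_1(\omega-\omega^*)$ needs care: since $\frac{1}{2}(\omega-\omega^*)$ is always a triple derivation, its Lie extension is $\theta$-invariant regardless of $\omega$, so the stated equivalence cannot simply be ``read off'' from that formula without revisiting how $D$ is meant to encode $\omega$ rather than its skew part.
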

\begin{proof} Let $\psi$ be a Lie
derivation of $\frak{L}(V)$ into $\frak{L}(M)$.
We show first that
\begin{equation}\label{eq:0919141}
J\psi\tp{abc}=\tp{abJ\psi(c)}+\tp{a,J\psi'(b),c}+\tp{J\psi(a)bc}.
\end{equation}
Let us define  $n:V\rightarrow  M$, and $n_1:V\rightarrow M$ by the formulas $\psi(0,0,x)=(m(x),\varphi(x),n(x))$, and
 $\psi(x,0,0)=(J\psi(x),\varphi_1(x),n_1(x))$.  Then
\begin{eqnarray*}
\lefteqn{(J\psi\tp{abc},\varphi_1\tp{abc},n_1\tp{abc})=\psi(\tp{abc},0,0)}\\
&=&\psi[[(a,0,0),(0,0,b)],(c,0,0)]\\
&=&[(a,0,0),(0,0,b)]\cdot\psi(c,0,0)-(c,0,0)\cdot\psi[(a,0,0),(0,0,b)]\\
&=&(0,a\bo b,0)\cdot(J\psi(c),\varphi_1(c),n_1(c))-(c,0,0)\cdot((a,0,0)\cdot\psi(0,0,b)-(0,0,b)\cdot\psi(a,0,0))\\
&=&(\tp{abJ\psi(c)},[a\bo b,\varphi_1(c)],-\tp{ban_1(c)})\\
&&-(c,0,0)\cdot((a,0,0)\cdot (m(b),\varphi(b),n(b))-(0,0,b)\cdot (J\psi(a),\varphi_1(a),n_1(a)))\\
&=&(\tp{abJ\psi(c)},[a\bo b,\varphi_1(c)],-\tp{ban_1(c)})\\
&&-(c,0,0)\cdot(-\varphi(b)(a),a\bo n(b),0)-(c,0,0)\cdot (0,-J\psi(a)\bo b,\varphi_1(a)^\natural b)\\
&=&(\tp{abJ\psi(c)},[a\bo b,\varphi_1(c)],-\tp{ban_1(c)})\\
&&-(-\tp{an(b)c},0,0))-(\tp{J\psi(a)bc},c\bo\varphi_1(a)^\natural
b,0).
\end{eqnarray*}

Note that
\begin{eqnarray*}
(m(b),\varphi(b),n(b))&=&\psi(0,0,b)\\&=&\widetilde\theta\psi'\theta(0,0,b)\\
&=&\widetilde\theta\psi'(b,0,0)\\
&=&\widetilde\theta(J\psi'(b),\varphi'(b),n'(b))\\
&=&(n'(b),-\varphi'^\natural (b),J\psi'(b)),
\end{eqnarray*}
so that $n(b)=J\psi'(b)$, proving (\ref{eq:0919141}).

Applying  (\ref{eq:0919141}) to  $\psi'=\widetilde\theta\psi\theta$, we have,
since $\psi''=\psi$
\begin{equation}\label{eq:0919142}
J\psi'\tp{abc}=\tp{abJ\psi'(c)}+\tp{a,J\psi(b),c}+\tp{J\psi'(a)bc}
\end{equation}
proving (i).

If $\psi$ is $\theta$-invariant, then $\psi'=\psi$
so that $J\psi$ is a triple derivation, proving (ii).
Example~\ref{example:inner} provides a proof of (iii).

(iv) is immediate from Lemma~\ref{lem:0922141} since $\omega-\omega^*$ is a triple
derivation.  The definitions show that $\widetilde\theta D\theta=D$ if and only if $\omega=(\omega-\omega^*)/2$, proving (v). Finally, if $\omega$ is an inner structural transformation, then $\omega-\omega^*$ is an inner triple derivation, so that $\frak{L}_1(\omega-\omega^*)$ is an inner derivation, proving (vi).
\end{proof}


The following theorem provides some significant infinite dimensional examples of Lie algebras in which every derivation is inner.  Its proof is in the spirit of \cite{PluRus14}.

\begin{theorem}\label{thm:5.7}
Let $V$ be a von Neumann algebra considered as a Jordan triple system with the triple product
$
\{xyz\}=(xy^*z+zy^*x)/2.$  Then every structural transformation on $V$ is an inner structural transformation.  Hence, every derivation of the TKK Lie algebra $\frak{L}(V)$ is inner.
\end{theorem}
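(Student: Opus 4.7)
The plan is to first extract a concrete algebraic form for any structural transformation $S$ on $V$, then realize this form as an element of $V_0$ (the main technical step), and finally derive the Lie-algebraic conclusion using Proposition~\ref{prop:1015141}.

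For the first step, expanding the structural identity with the explicit triple product $\{xyz\}=(xy^*z+zy^*x)/2$ and evaluating at $y=1_V$ (which is available since $V$ is a unital von Neumann algebra), one finds that $S$ must be of the form $S(x)=ax+xb$ for elements $a,b\in V$ determined by $S(1_V)$ and $S^*(1_V)$, with companion $S^*(y)=a^*y+yb^*$. A direct verification confirms that every such map is indeed a structural transformation. The problem is thereby reduced to showing that the operator $x\mapsto ax+xb$ lies in the linear span $V_0$ of the box operators $u\bo m$, where $u\bo m(x)=\tfrac{1}{2}(um^*x+xm^*u)$.

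The realization of $x\mapsto ax+xb$ as a finite sum of box operators is the main obstacle. The symmetric direction is immediate: taking $u=a$, $m=1_V$ yields $\tfrac{1}{2}(L_a+R_a)\in V_0$, where $L_a,R_a$ denote left and right multiplication by $a$; and taking $u=1_V$, $m=b^*$ yields $\tfrac{1}{2}(L_b+R_b)\in V_0$. For the antisymmetric contributions one exploits the non-commutativity of $V$: for a partial isometry $v\in V$ and appropriately chosen $u,m$ built from $v$, the quantity $um^*-m^*u$ realizes the commutator $[u,m^*]$. In the spirit of the commutator-type arguments of \cite{PluRus14}, such commutators in a von Neumann algebra suffice to produce arbitrary asymmetric pieces $L_c-R_c$, from which the full operator $L_a+R_b$ can be assembled as a finite linear combination of box operators. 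This realization is the heart of the argument and is where the structure of $V$ as a von Neumann algebra enters decisively.

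Having established that every structural transformation on $V$ is inner, the statement about derivations of $\frak{L}(V)$ follows. Let $\psi$ be any derivation of $\frak{L}(V)$. By Proposition~\ref{prop:1015141}(i), the Jordan restrictions $J\psi$ and $J\psi':=J(\widetilde{\theta}\psi\theta)$ are both structural transformations on $V$, hence by the first part there exist $\vp_1,\vp_2\in V_0$ with $J\psi=-\vp_1$ and $J\psi'=-\vp_2$. Guided by Example~\ref{example:inner}, one can construct an inner derivation $\psi_0={\rm ad}(m,\vp,n)$ of $\frak{L}(V)$ with suitable $m,n\in V$ and $\vp\in V_0$ arranged so that $J\psi_0=J\psi$ and $J\psi_0'=J\psi'$. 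Since $\frak{L}(V)=V\oplus V_0\oplus V$ is generated as a Lie algebra by $V\cup\theta(V)$ (with $V_0=[V,\theta(V)]$), any two derivations of $\frak{L}(V)$ that coincide on $V$ and $\theta(V)$ must coincide on all of $\frak{L}(V)$, so $\psi=\psi_0$ is inner.
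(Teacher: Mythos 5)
Your overall architecture resembles the paper's --- peel off a multiplication operator determined by $S(1)$, then handle what remains via the commutator structure of the von Neumann algebra --- but your first step suppresses the two theorems that carry all the weight. Expanding the structural identity and evaluating at $y=1$ only shows that $S_0 = S - 1\bo \overline{S}(1)$ (where $\overline{S}=S^*$ and $\overline{S}(1)=S(1)^*$) is a \emph{Jordan} derivation of $V$; it does not show that $S_0(x)=cx-xc$ for some $c\in V$, and hence does not yield the form $S(x)=ax+xb$. To get there one must invoke Sinclair's theorem \cite{Sinclair70} that a Jordan derivation of a semisimple Banach algebra is an associative derivation, and then the Kadison--Sakai theorem \cite{Kadison66,Sakai66} that every derivation of a von Neumann algebra is inner. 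Neither is an algebraic consequence of ``evaluating at $y=1$,'' and your proposal neither cites nor replaces them, so the reduction to $S(x)=ax+xb$ is unjustified as written. Once that form is granted, what you call the ``main obstacle'' is resolved in the paper by the Pearcy--Topping/Halpern/Fack--de la Harpe decomposition: every $a\in V$ is $z+\sum_i[c_i,d_i]$ with $z$ central, so that $L_a-R_a=\sum_i\bigl(L_{[c_i,d_i]}-R_{[c_i,d_i]}\bigr)=2\sum_i\bigl(c_i\bo d_i^*-d_i\bo c_i^*\bigr)\in V_0$. Your appeal to partial isometries and ``commutators in the spirit of \cite{PluRus14}'' gestures at this but never states the decomposition actually needed, which is where the von Neumann algebra structure genuinely enters.

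There is also a gap in your argument for the second assertion. Arranging $J\psi_0=J\psi$ and $J\psi_0'=J\psi'$ does \emph{not} force $\psi_0$ and $\psi$ to agree on the generators $(V,0,0)$ and $(0,0,V)$: as in the proof of Proposition~\ref{prop:1015141}, a derivation sends $(x,0,0)$ to a triple $(J\psi(x),\varphi_1(x),n_1(x))$ with components in all three summands of $\frak{L}(V)$, and you have only matched the first component. So your generation argument, while a sound strategy in principle, does not close without controlling $\varphi_1$ and $n_1$ as well. The paper instead routes this step through Proposition~\ref{prop:1015141}(iv)--(vi), attaching to $\omega=J\psi$ the inner derivation $\tfrac12\frak{L}_1(\omega-\omega^*)$ once $\omega$ is known to be an inner structural transformation.
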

\begin{proof}
Let $S$ be a structural transformation on the von Neumann algebra $V$ and to avoid cumbersome notation, denote $S^*$ by $\overline{S}$.  From the defining equations, $\overline{S}(1)=S(1)^*$, and if $S(1)=0$, then $S$ is a Jordan derivation.

For an arbitrary structural transformation $S$, write $S=S_0+S_1$ where $S_0=S-1\bo \overline{S}(1)$ is therefore a Jordan derivation and $S_1=1\bo \overline{S}(1)$ is an inner structural transformation.   By the theorem of Sinclair \cite{Sinclair70}, $S_0$ is a derivation and by the theorems of Kadison and Sakai, \cite{Kadison66,Sakai66}, $S_0$ is an inner derivation, say
$S_0(x)=ax-xa$ for some $a\in V$.   By well known structure of the span of commutators in von Neumann algebras due to Pearcy-Topping,  Halmos, Halpern, Fack-de la Harpe, and others  (see \cite{PluRus14} for the references), $a=z+\sum[c_i,d_i]$, where $c_i,d_i\in V$ and  $z$ belongs to the center of $V$.  It follows that
\[
S_0=2\sum_i c_i\bo d_i^*-2\sum_i d_i\bo c_i^*
\]
and is therefore also an inner structural transformation.  The second statement follows from Proposition~\ref{prop:1015141}.
\end{proof}

We determine the structure of $\frak{L}(V)$ when $V$ is a finite von Neumann algebra in Corollary~\ref{cor:1101141} below.

\section{Examples}\label{sect5}

We conclude the paper with some examples of TKK Lie algebras and
 some Jordan triple cocycles.  Let us first note the following immediate consequences of our
 construction.

\begin{theorem} Let $V$ be a Jordan triple with TKK Lie algebra $(\frak{L}(V), \theta)$.
If the k-th Lie cohomology group $H^k(\frak{L}(V),\frak{L}(M))$ vanishes,
then $H^k(V, M)=\{0\}$ and $H^k_\theta(V, M)=\{0\}$.
\end{theorem}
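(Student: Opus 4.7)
My plan is to obtain both vanishing statements as essentially bookkeeping consequences of the hypothesis, by unravelling the definitions of $Z^k, B^k, Z^k_\theta, B^k_\theta$ at the Jordan triple level and transporting the Lie-algebra equality $\ker d_k = \operatorname{im} d_{k-1}$ through the Jordan restriction map $J^k$. The key observation is that, by the very construction of Jordan triple cohomology given in Section~\ref{sect4}, both $H^k(V,M)$ and $H^k_\theta(V,M)$ arise as the $J^k$-image of the corresponding Lie cohomology group, so vanishing on the Lie side transfers automatically.

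First, I would dispose of $H^k(V,M)$. The hypothesis means precisely that $\ker d_k = \operatorname{im} d_{k-1}$ inside $A^\bullet(\frak{L}(V), \frak{L}(M))$. Applying $J^k$ to both sides and invoking the paper's definitions
\[
Z^k(V,M) = J^k(\ker d_k), \qquad B^k(V,M) = J^k(\operatorname{im} d_{k-1}),
\]
I immediately get $Z^k(V,M) = B^k(V,M)$ and hence $H^k(V,M) = \{0\}$.

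Second, I would handle $H^k_\theta(V,M)$ by passing through the involutive Lie cohomology. The argument given at the end of Section~\ref{sec:harris} shows that the map $\psi + d_{k-1}(A^{k-1}_\theta) \mapsto \psi + d_{k-1}(A^{k-1})$ identifies $H^k_\theta(\frak{L}(V),\frak{L}(M))$ with a subgroup of $H^k(\frak{L}(V),\frak{L}(M))$. Hence the vanishing hypothesis forces $H^k_\theta(\frak{L}(V),\frak{L}(M)) = \{0\}$, i.e.\ $\ker d_k|_{A^k_\theta} = d_{k-1}(A^{k-1}_\theta)$. Now applying $J^k$ and using the definitions
\[
Z^k_\theta(V,M) = J^k\bigl(\ker d_k|_{A^k_\theta}\bigr), \qquad B^k_\theta(V,M) = J^k\bigl(d_{k-1}(A^{k-1}_\theta)\bigr),
\]
I conclude $Z^k_\theta(V,M) = B^k_\theta(V,M)$, so $H^k_\theta(V,M) = \{0\}$.

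I do not expect a real obstacle here: the whole argument is formal, essentially an application of $J^k$ to an equality of vector spaces. The only step that requires even mild care is the descent in the involutive case, where one must remember that the embedding $H^k_\theta \hookrightarrow H^k$ at the Lie level (already established in Section~\ref{sec:harris}) is exactly what allows the $\theta$-invariant conclusion to be inherited from the unrestricted hypothesis.
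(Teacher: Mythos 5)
Your argument is correct and is exactly the definitional unwinding the paper has in mind: the theorem is stated there without proof as an ``immediate consequence of our construction,'' and your two steps (applying $J^k$ to the equality $\ker d_k=\operatorname{im}d_{k-1}$, then descending to the involutive case via the embedding $H^k_\theta\hookrightarrow H^k$ asserted in Section~\ref{sec:harris}) are precisely what makes it immediate. No gap.
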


We have noted  the
one-to-one correspondence between the triple derivations
of a Jordan triple $V$ and the $\theta$-invariant Lie derivations of the TKK Lie algebra
$(\frak L(V), \theta)$, as well as the one-to-one correspondence between the Jordan inner derivations
of $V$ and the Lie inner derivations of $(\frak L(V), \theta)$.

\begin{corollary}
Let  $V$ be a finite
dimensional  Jordan triple with semisimple TKK Lie algebra $\frak L(V)$. Then for any finite dimensional $V$-module $M$,
we have $H^1(V,M) = H^2(V,M) = \{0\}$. In particular, every triple derivation from $V$ to $M$
is inner.
\end{corollary}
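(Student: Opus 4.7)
The plan is to invoke the preceding theorem together with the classical first and second Whitehead lemmas, which state that for any finite dimensional semisimple Lie algebra $\frak g$ over a field of characteristic zero and any finite dimensional $\frak g$-module $\frak N$, one has $H^1(\frak g,\frak N) = H^2(\frak g,\frak N) = 0$.

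First I would verify that $\frak L(M) = M \oplus M_0 \oplus M$ is finite dimensional. Since $M$ is finite dimensional by hypothesis, it suffices to note that $M_0$ is a subspace of $L(V,M)$, the space of linear maps from $V$ to $M$, which is finite dimensional because both $V$ and $M$ are. Thus $\frak L(M)$ is a finite dimensional module over the finite dimensional semisimple Lie algebra $\frak L(V)$, and Whitehead's lemmas yield
\[
H^1(\frak L(V), \frak L(M)) = H^2(\frak L(V), \frak L(M)) = \{0\}.
\]
The theorem stated just above this corollary then gives $H^k(V,M) = \{0\}$ and $H^k_\theta(V,M) = \{0\}$ for $k = 1,2$.

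For the final clause, the analysis carried out in the subsection on triple derivations established that $Z^1_\theta(V,M)$ coincides with the space of triple derivations from $V$ into $M$, while $B^1_\theta(V,M)$ coincides with the space of inner triple derivations. Since
\[
H^1_\theta(V,M) = Z^1_\theta(V,M)/B^1_\theta(V,M) = \{0\},
\]
every triple derivation $V \to M$ is automatically an inner triple derivation.

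The argument is essentially a routing of standard inputs and I do not anticipate a genuine obstacle. The only point that might require minor care is ensuring the Whitehead lemmas are applied over the correct scalar field: for complex $V$ they are immediate, and for real $V$ one may appeal to the real version directly or else pass to the complexification, noting that semisimplicity is preserved and that vanishing of cohomology descends from the complexified module to the real one.
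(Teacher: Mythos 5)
Your proposal is correct and follows exactly the paper's route: the paper's proof is the one-line invocation of Whitehead's lemmas giving $H^1(\frak L(V),\frak L(M))=H^2(\frak L(V),\frak L(M))=\{0\}$, combined with the immediately preceding theorem and the earlier identification of $Z^1_\theta(V,M)$ and $B^1_\theta(V,M)$ with triple derivations and inner triple derivations. Your added checks (finite dimensionality of $\frak L(M)$ and the remark on the scalar field) are sensible elaborations of the same argument.
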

\begin{proof}
This follows from Whitehead's lemmas $H^1(\frak L (V),\frak L(M)) = H^2(\frak L (V),\frak L(M)) =\{0\}$.
\end{proof}

In fact, in the above corollary, we have $H^k(\frak L (V),\frak L(M))=\{0\}$ for all $k \geq 3$ if $\frak{L}(M)$ is a nontrivial
irreducible module over $\frak{L}(V)$. We refer to \cite{z} for a converse of this result.

\subsection{Examples of cocycles}
Let $V$ be a Jordan triple with TKK
Lie algebra $(\frak{L}(V), \theta)$. We discuss  examples of Jordan triple cocycles in $Z^k(V,M)$,
where $M$ is a triple $V$-module, and compare them with the Lie cocycles in $Z^k(\frak{L}(V), \frak{L}(M))$.
We have shown in the previous section that the space of Jordan triple derivations is exactly the space of
 $1$-cocycles $Z^1_\theta (V,M)= J^1(Z_\theta^1(\frak{L}(V), \frak{L}(M)))$, where the $\theta$-invariant Lie $1$-cocycles
$Z_\theta^1(\frak{L}(V), \frak{L}(M))$ are exactly the $\theta$-invariant Lie derivations from $\frak L(V)$ to $\frak{L}( M)$. We have also shown that
$B^1_\theta (V,M)= J^1(B_\theta^1(\frak{L}(V), \frak{L}(M)))$ is the space of triple inner derivations on $V$, coming from the $\theta$-invariant Lie inner derivations $B_\theta^1(\frak{L}(V), \frak{L}(M))$.

Examples of triple $2$-cocycles can be constructed from Jordan restrictions of Lie
$2$-cocycles.

\begin{example}\label{example:0922141}
If $\omega\in A^2(V,M)$  is extendable with $\frak{L}_2(\omega)\in
Z^2(\frak{L}(V),\frak{L}(M))$, then $\omega=0$.
\end{example}
\begin{proof} For $x,y,z\in V$,
\begin{eqnarray*}
0&=&d_2\frak{L}_2(\omega)((x,0,0),(y,0,0),(0,0,z))\\
&=&(x,0,0)\cdot(\frak{L}(\omega)((y,0,0),(0,0,z)-(y,0,0)\cdot(\frak{L}(\omega)((x,0,0),(0,0,z)\\
&&
+
(0,0,z)\cdot(\frak{L}(\omega)((x,0,0),(y,0,0)-\frak{L}(\omega)([(x,0,0),(y,0,0],(0,0,z))\\
&&
+\frak{L}(\omega)([(x,0,0),(0,0,z],(y,0,0))-\frak{L}(\omega)([(y,0,0),(0,0,z],(x,0,0))\\
&=&-(0,\omega(x,y)\bo z,0),
\end{eqnarray*}
hence $\omega(x,y)\bo z=0$ for all $x,y,z$ and $\omega=0$.
\end{proof}

\begin{example}
Let $\varphi\in M_0$ be an inner triple derivation, and let $b\in V$.  Define
a linear map $\psi:\frak{L}(V)\rightarrow \frak{L}(M)$ by
\[
\psi(\frak{z}) =[\,[\frak{z}, (0,\varphi,0))]\,, (0,0,b)] \qquad (\frak{z} \in \frak{L}(V)).
\]
Observe that $\psi$ is not $\theta$-invariant. Indeed, it can be seen readily that
$\tilde\theta\psi( x,0,0) = (0, b\bo \vp (x), 0)$ while $\psi(\theta(x,0,0))=0$.
Nevertheless $d_1\psi\in B^2(\frak{L}(V),\frak{L}(M))$ and the triple $2$-coboundary
$Jd_1\psi \in B^2(V,M)$ is given by
\begin{eqnarray*}
Jd_1\psi(x,y)&=&\iota_pd_1\psi((x,0,0), (y,0,0))\\
&=& \iota_p((x,0,0)\cdot \psi(y,0,0)-(y,0,0)\cdot\psi(x,0,0))\\
&=& \iota_p((x,0,0)\cdot (0, -\vp(y) \bo b, 0) - (y,0,0)\cdot(0,-\vp(x)\bo b, 0))\\
&=& \iota_p((\tp{\varphi(y),b,x},0,0) - (\tp{\varphi(x),b,y},0,0))\\
&=& \tp{\varphi(y),b,x}-\tp{\varphi(x),b,y}
\end{eqnarray*}
showing that $B^2(V,M)\ne 0$.
We note that $d_1\psi$ is not $\theta$-invariant
since
\begin{eqnarray*}
&&\widetilde\theta d_1\psi((x,0,0), (y,0,0))= (0,0,\tp{\varphi(y),b,x})- (0,0,\tp{\varphi(x),b,y}),\\
&& d_1\psi ((0,0,x), (0,0,y))= (0,0 - \tp{b, \vp(y),x} + \tp{b, \vp(x),y}).
 \end{eqnarray*}
Also $Jd_1\psi$ need not be extendable. Let $V=M_2(\mathbb{C})$ be the Jordan triple of $2\times 2$
complex matrices. Let $v =\left(\begin{matrix} 1&0\\0&0 \end{matrix}\right)$ and $u = \left(\begin{matrix} 0&0\\0&1 \end{matrix}\right)$. Then we have $u\bo v =0$ and one can find $a,c\in V$ such that
$$Jd_1\psi(u,a)\bo (v+c) + (u+a)\bo Jd_1\psi(v,c)\neq 0.$$ To see this,
let $c=v$. Then it suffices to find $a\in V$ such that
$Jd_1\psi(u,a)\bo 2v \neq 0$, where
$$Jd_1\psi(u,a) = \{\vp(u), b,a\} - \{\vp(a),b,u\}.$$ Let $\vp = m\bo v - v\bo m \in M_0$ where
$m = \left(\begin{matrix} 1&-1\\-1&1 \end{matrix}\right)$. Then we have $\vp(u) = - \{v,m,u\} =
\left(\begin{matrix} 0&1\\1&0 \end{matrix}\right)$. Now let $b=v$. Then we have
$$Jd_1\psi(u,a)\bo v (x) = \left\{\left\{\left(\begin{matrix} 0&1\\1&0 \end{matrix}\right), \left(\begin{matrix} 1&0\\0&0 \end{matrix}\right), a\right\}, v,x\right\}.$$ Finally let $a = v$, then
$$Jd_1\psi(u,a)\bo v (v) =\frac{1}{4}\left(\begin{matrix} 0&1\\1&0 \end{matrix}\right)  \neq 0.$$
\end{example}

We have seen in Example~\ref{example:0922141} that there are no
non-zero extendable elements $\omega\in Z^2(V,M)$ with
$\frak{L}_2(\omega)\in Z^2(\frak{L}(V),\frak{L}(M))$.  The next
example examines this phenomenon for extendable $\omega\in
A^3(V,M)$ with $\frak{L}_3(\omega)\in
Z^3_\theta(\frak{L}(V),\frak{L}(M))$. We state it now as a theorem, in the statement of which, for $a,b\in V$ and $m\in M$,  $[a,b]$ denotes $a\bo b-b\bo a$ and $[m,a]$ denotes $m\bo a-a\bo m$. The proof is provided in subsection~\ref{app:1029143}.

\begin{theorem}\label{prop:0928141}
Let $\omega$ be an extendable element of $A^3(V,M)$.   Then its
Lie extension $\frak{L}_3(\omega)$ is a Lie $3$-cocycle in
$A^3_\theta(\frak{k}(V), \frak{k}(M))$  if and only if  $\omega$
satisfies the following three  conditions:
\begin{equation}
[a,b]\omega(x,y,z)=\omega([a,b]x,y,z)+\omega(x,[a,b]y,z)+\omega(x,y,[a,b]z)
\end{equation}
  for all   $a,b,x,y,z  \in V$;
\begin{equation}
 [\omega(a,b,c),d]=[\omega(d,b,c),a]=[\omega(a,b,d),c]=[\omega(a,d,c),b]
\end{equation}
 for all $a,b,c,d
\in V$; and \begin{equation}
[\omega(x,y,[a,b]z),c]=0.
\end{equation}
for all $x,y,z,a,b,c \in V$.
\end{theorem}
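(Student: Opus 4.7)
The plan is to unpack the cocycle identity $d_3\frak{L}_3(\omega)(X_1,X_2,X_3,X_4)=0$ on elements $X_i=(v_i,h_i,v_i)\in\frak{k}(V)$ with $h_i^\natural=-h_i$ (so each $h_i$ is a linear combination of commutators $[a,b]=a\bo b-b\bo a$), using the explicit formulas (\ref{eq:0418152}) for the Lie bracket on $\frak{L}(V)$, (\ref{eq:0418151}) for the module action of $\frak{L}(V)$ on $\frak{L}(M)$, (\ref{eq:1015142}) for $\frak{L}_3(\omega)$, and the cocycle formula (\ref{eq:1015141}). Since $\frak{L}_3(\omega)$ is $\theta$-invariant by the lemma after (\ref{eq:1015142}), it suffices to check that the $M$-component and the $M_0$-component of the output vanish; the second $M$-component then follows automatically from $\widetilde\theta$-equivariance.

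For the \emph{necessity} direction I would specialize the arguments. Taking $X_1=(0,[a,b],0)$ and $X_j=(v_j,0,v_j)$ for $j=2,3,4$, the $M$-component of the cocycle identity is precisely the derivation-type statement of condition $(1)$. Next, taking all four $X_i$ of the form $(v_i,0,v_i)$, the $M_0$-component collapses to a combination of box operators $[\omega(v_{i_1},v_{i_2},v_{i_3}),v_{i_4}]$ whose alternating sum over the four indices yields condition $(2)$. Finally, taking one mixed argument $(c,[a,b],c)$ together with three outer-only arguments and reading off the $M_0$-terms that survive after invoking $(1)$ and $(2)$ isolates condition $(3)$.

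For \emph{sufficiency}, I assume the three conditions and verify that every component of $d_3\frak{L}_3(\omega)(X_1,\ldots,X_4)$ vanishes for arbitrary $X_i=(v_i,[a_i,b_i],v_i)\in\frak{k}(V)$. The $M$-component is a signed sum of terms $h_i\omega(v_1,v_2,v_3)$ and $\omega(\ldots,h_i v_j,\ldots)$; applying condition $(1)$ with $h_i=[a_i,b_i]$ collapses these into the alternating structure built into $d_3$ and they cancel. The $M_0$-component is more intricate: after expanding the brackets $[h_i,h_j]=[a_i\bo b_i-b_i\bo a_i,\,a_j\bo b_j-b_j\bo a_j]$ via the identity $[a\bo b,c\bo d]=\{abc\}\bo d-c\bo\{dab\}$, the resulting terms split into two families, those of the shape $[\omega(v_{i_1},v_{i_2},v_{i_3}),v_{i_4}]$, cancelled by condition $(2)$, and those of the shape $[\omega(\ldots,[a,b]v_j,\ldots),c]$, cancelled by condition $(3)$ (with condition $(1)$ used to replace $\omega$-of-commutator-image terms by commutator-of-$\omega$-terms as needed).

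The principal obstacle is combinatorial bookkeeping. The coboundary $d_3$ produces $4+6=10$ summands; once the three-fold structure of $\frak{L}(V)$ and $\frak{L}(M)$ is unpacked and the inner piece $[h_i,h_j]$ is expanded into four $\{abc\}\bo d$-type terms per pair, the number of atomic terms becomes large. The delicate point is to organize them into orbits under the $S_4$-symmetry of the cocycle formula so that exactly conditions $(1)$, $(2)$, $(3)$ account for all cancellations, and conversely so that the three specializations used in the necessity argument are enough to force these three conditions and produce no extraneous identities.
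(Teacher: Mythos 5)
Your overall strategy coincides with the paper's: decompose each $X_i\in\frak{k}(V)$ into an outer part $(x_i,0,x_i)$ and an inner part $(0,[a_i,b_i],0)$, use multilinearity to reduce the cocycle identity to a list of specializations, and match these against conditions (1)--(3). The derivation of condition (1) from the one-inner/three-outer specialization is exactly the paper's equation (\ref{eq:0923142}), and the sufficiency outline is sound in principle. However, the necessity direction has a genuine gap in the way you assign specializations to conditions (2) and (3).

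First, the all-outer specialization does \emph{not} yield condition (2). As the paper's computation of (\ref{eq:0923141}) shows, four outer arguments produce only the single alternating-sum identity
$[x_1,\omega(x_2,x_3,x_4)]-[x_2,\omega(x_1,x_3,x_4)]+[x_3,\omega(x_1,x_2,x_4)]-[x_4,\omega(x_1,x_2,x_3)]=0$,
which is one linear relation among the four brackets, whereas condition (2) asserts that all four brackets are individually \emph{equal} --- three independent equalities. A signed sum vanishing cannot be upgraded to pairwise equality, and the identity is already multilinear in distinct variables, so no further polarization is available. In the paper, condition (2) is extracted from the two-inner/two-outer case (\ref{eq:0923143}), via the expansion (\ref{eq:0928142}) followed by systematically setting subsets of the variables to zero to split that identity into irreducible pieces. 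Second, your proposed source for condition (3) --- one mixed argument $(c,[a,b],c)$ together with three outer arguments --- decomposes by linearity into the all-outer case plus the one-inner/three-outer case, both of which you have already used; it therefore contains no new information and cannot produce $[\omega(x,y,[a,b]z),c]=0$. In the paper this condition comes only from the all-inner (four commutator arguments, eight variables) specialization (\ref{eq:0923145}), and even then only after a substantial reduction using conditions (1) and (2) and a further decomposition into irreducible pieces. Without the two-inner and four-inner specializations your necessity argument cannot close, so the proof as proposed would fail at conditions (2) and (3).
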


\subsection{Examples of TKK algebras}
  We begin with the following construction  from \cite[Chapter 12]{Meyberg72}, which has its genesis in \cite[pp. 809--810]{KoecherAJM67}. Let $A$ be a unital associative algebra with Lie product the commutator $[x,y]=xy-yx$, Jordan product the anti-commutator $x\circ y=(xy+yx)/2$ and Jordan triple product $\tp{xyz}=(xyz+zyx)/2$ (or $\tp{xyz}=(xy^*z+zy^*x)/2$ if $A$ has an involution). Denote by $Z(A)$ the center of $A$ and by $[A,A]$ the set of finite sums of commutators.

\begin{proposition}\label{prop:1101141} Let $A$ be a unital associative algebra with or without an involution considered as a Jordan triple system.
If $Z(A)\cap [A,A]=\{0\}$, then the mapping $(x,a\bo b,y)\mapsto \matr{ab}{x}{y}{-ba}$ is an isomorphism of the TKK Lie algebra $\frak{L}(A)$  onto the Lie subalgebra
\begin{equation}\label{eq:1103141}
\left\{\matr{u+\sum[v_i,w_i]}{x}{y}{-u+\sum[v_i,w_i]}:u,x,y,v_i,w_i\in A\right\}
\end{equation}
of the Lie algebra $M_2(A)$ with the commutator product.
\end{proposition}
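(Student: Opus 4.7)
The plan is to check, in order: well-definedness of $\phi$, the Lie homomorphism identity, and the description of the image; injectivity will fall out of the well-definedness argument.

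For well-definedness I would show that $h = \sum_i a_i \bo b_i = 0$ in $V_0$ forces $\sum_i a_i b_i = 0$ and $\sum_i b_i a_i = 0$ in $A$. The condition $h(c) = 0$ for every $c \in A$ unpacks, via the triple product, to $(\sum_i a_i b_i)\, c + c\, (\sum_i b_i a_i) = 0$ for all $c$. Setting $c = 1$ yields $\sum_i b_i a_i = -\sum_i a_i b_i$, and substituting back gives $(\sum_i a_i b_i)\, c = c\, (\sum_i a_i b_i)$ for all $c$, so $\sum_i a_i b_i \in Z(A)$. But then $2\sum_i a_i b_i = \sum_i [a_i, b_i] \in [A,A]$, and the hypothesis $Z(A) \cap [A,A] = \{0\}$ kills it, so $\sum_i a_i b_i = 0 = \sum_i b_i a_i$. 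This is the sole use of the hypothesis, and injectivity of $\phi$ follows from the same calculation: if $\phi(x, h, y) = 0$ then $x = y = 0$ and both diagonal sums vanish, whence $h = 0$.

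For the Lie bracket identity $\phi[X, Y] = [\phi(X), \phi(Y)]$, I would split the bracket in $\frak{L}(V)$ as in the proof of Lemma~\ref{lem:0922141} into the four cases $[(x,0,y), (u,0,v)]$, $[(0, a \bo b, 0), (u,0,v)]$, $[(x,0,y), (0, c \bo d, 0)]$ and $[(0, a \bo b, 0), (0, c \bo d, 0)]$, and check each entrywise. The essential tools are the TKK bracket (\ref{eq:0418152}), the involution $h^\natural = \sum b_i \bo a_i$ on $V_0$, and the identity $[a \bo b, c \bo d] = \tpp{a}{b}{c}\bo d - c \bo \tpp{d}{a}{b}$, which follows from the main Jordan triple identity. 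In the $(V_0, V_0)$ case this produces the matrix commutator $[ab, cd]$ in the diagonal blocks, matching the right-hand side; in the mixed cases the key input is the Jordan triple product $\tpp{a}{b}{x} = (abx + xba)/2$ together with the $(1,2)$-block commutator $ab \cdot x + x \cdot ba$.

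For the image: the off-diagonal entries of $\phi(x, h, y)$ are arbitrary elements of $A$, while the diagonal pair $(\sum_i a_i b_i, -\sum_i b_i a_i)$ can be written as $(u + s,\, -u + s)$ with $u = \tfrac12 \sum_i (a_i b_i + b_i a_i) \in A$ and $s = \tfrac12 \sum_i [a_i, b_i] \in [A,A]$, which gives containment in (\ref{eq:1103141}). Conversely, an element of the form $(u + \sum_i [v_i, w_i],\, -u + \sum_i [v_i, w_i])$ is the image of $u \bo 1 + \sum_i (v_i \bo w_i - w_i \bo v_i) \in V_0$, giving surjectivity onto (\ref{eq:1103141}). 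The main obstacle will be the case-by-case bracket verification, particularly the $(V_0, V_0)$ case where the main Jordan triple identity is invoked and the constant factors coming from the $1/2$ in $\tpp{x}{y}{z} = (xyz+zyx)/2$ must be tracked carefully.
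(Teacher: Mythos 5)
The paper itself offers no proof of this proposition (it is imported from Meyberg's notes), so there is no in-paper argument to compare against; I can only assess your plan on its own terms. Your well-definedness/injectivity step is correct and is indeed the only place the hypothesis $Z(A)\cap[A,A]=\{0\}$ enters: from $\sum_i a_i\bo b_i=0$ one gets $(\sum_i a_ib_i)c+c(\sum_i b_ia_i)=0$ for all $c$, evaluation at $c=1$ gives $\sum_i b_ia_i=-\sum_i a_ib_i$, whence $\sum_i a_ib_i$ is central and equals $\tfrac12\sum_i[a_i,b_i]\in[A,A]$, hence vanishes. Your description of the image and the explicit preimage $u\bo 1+\sum_i(v_i\bo w_i-w_i\bo v_i)$ of a general element of (\ref{eq:1103141}) are also correct.

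The gap is in the bracket verification, and it sits exactly at the point you flag as needing care. With the normalization $\{xyz\}=(xyz+zyx)/2$ stated just before the proposition, the displayed map $\phi$ does \emph{not} intertwine the brackets: $[(0,a\bo b,0),(u,0,0)]=(\{abu\},0,0)$ is sent to $\matr{0}{\frac12(abu+uba)}{0}{0}$, whereas $[\phi(0,a\bo b,0),\phi(u,0,0)]=\matr{0}{abu+uba}{0}{0}$; likewise $\phi(0,[a\bo b,c\bo d],0)$ has $(1,1)$ entry $\{abc\}d-c\{dab\}=\frac12[ab,cd]$ against $[ab,cd]$ on the matrix side. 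On the other hand $[(x,0,0),(0,0,v)]=(0,x\bo v,0)\mapsto\matr{xv}{0}{0}{-vx}$ matches the matrix commutator exactly, so these factor-of-two discrepancies cannot be absorbed by a uniform rescaling of $\phi$. Your asserted identification of $\{abx\}=(abx+xba)/2$ with the $(1,2)$-block commutator entry $abx+xba$ is precisely the false equality. The proposition is correct either for the normalization $\{xyz\}=xyz+zyx$ or for the rescaled map $(x,a\bo b,y)\mapsto\matr{\frac12 ab}{x}{\frac12 y}{-\frac12 ba}$, which has the same image; as written, your case-by-case check cannot close, and you need to record one of these fixes explicitly (and note the trivial modification $ab\mapsto ab^*$, $ba\mapsto b^*a$ in the involutive case).
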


\begin{corollary}\label{cor:1101141}
Let $V$ be a finite von Neumann algebra.  Then
$\frak{L}(V)$ is isomorphic to the Lie algebra $[M_2(V),M_2(V)]$.
\end{corollary}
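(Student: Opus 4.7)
The plan is to apply Proposition~\ref{prop:1101141} to $V$ viewed as a Jordan triple via $\tp{xyz}=(xy^*z+zy^*x)/2$, and then to identify the resulting image subalgebra (\ref{eq:1103141}) of $M_2(V)$ with the commutator subspace $[M_2(V),M_2(V)]$.

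First I would verify the hypothesis $Z(V)\cap[V,V]=\{0\}$ of Proposition~\ref{prop:1101141}. Since $V$ is finite it carries a faithful centre-valued trace $\tau:V\rightarrow Z(V)$ that restricts to the identity on $Z(V)$ and annihilates every commutator $ab-ba$; hence any $z\in Z(V)\cap[V,V]$ satisfies $z=\tau(z)=0$. Proposition~\ref{prop:1101141} then identifies $\frak{L}(V)$ with the Lie subalgebra
\[
\mathcal{S}=\left\{\matr{u+s}{x}{y}{-u+s}\ :\ u,x,y\in V,\ s\in[V,V]\right\}
\]
of $M_2(V)$.

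It remains to show $\mathcal{S}=[M_2(V),M_2(V)]$. Note first that $\matr{a}{b}{c}{d}\in\mathcal{S}$ if and only if $a+d\in[V,V]$, via $s=(a+d)/2$ and $u=(a-d)/2$. For the inclusion $[M_2(V),M_2(V)]\subseteq\mathcal{S}$, a direct expansion shows that for any $X,Y\in M_2(V)$ the diagonal sum of $XY-YX$ equals $[X_{11},Y_{11}]+[X_{12},Y_{21}]+[X_{21},Y_{12}]+[X_{22},Y_{22}]\in[V,V]$. For the reverse inclusion, the elementary matrix-unit identities
\[
\left[\matr{b}{0}{0}{0},\matr{0}{1}{0}{0}\right]=\matr{0}{b}{0}{0},\ \left[\matr{0}{1}{0}{0},\matr{0}{0}{a}{0}\right]=\matr{a}{0}{0}{-a},\ \left[\matr{0}{0}{0}{c},\matr{0}{0}{0}{d}\right]=\matr{0}{0}{0}{[c,d]}
\]
realize any $M\in\mathcal{S}$ as a finite sum of commutators in $M_2(V)$, once the diagonal sum $a+d$ is written as $\sum_i[c_i,d_i]$.

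None of the steps presents a serious obstacle: the only non-elementary ingredient is the existence of the faithful centre-valued trace on a finite von Neumann algebra. The finiteness hypothesis is essential here — in a properly infinite von Neumann algebra every element is a single commutator, so $Z(V)\subseteq[V,V]$ and the hypothesis of Proposition~\ref{prop:1101141} already fails.
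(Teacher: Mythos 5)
Your proof is correct, and the hypothesis check ($Z(V)\cap[V,V]=\{0\}$ via the faithful centre-valued trace) is exactly the paper's. Where you genuinely diverge is in identifying the subalgebra (\ref{eq:1103141}) with $[M_2(V),M_2(V)]$: the paper invokes the theorem of Fack--de la Harpe that in a finite von Neumann algebra the commutator subspace coincides with the kernel of the central trace, applies it to $M_2(V)$ to describe $[M_2(V),M_2(V)]$, and then (implicitly using the same theorem for $V$ to get $a+d\in[V,V]$) rewrites a central-trace-zero matrix in the form (\ref{eq:1103141}) via $b'=(a-d)/2$, $c'=(a+d)/2$. You instead prove the purely algebraic identity
\[
[M_2(A),M_2(A)]=\left\{\matr{a}{b}{c}{d}\in M_2(A): a+d\in[A,A]\right\},
\]
valid for \emph{any} unital associative algebra $A$: one inclusion from the computation that the diagonal sum of $XY-YX$ is $[X_{11},Y_{11}]+[X_{12},Y_{21}]+[X_{21},Y_{12}]+[X_{22},Y_{22}]$, the other from matrix-unit commutator identities. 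This buys two things: your argument is self-contained (the only operator-algebraic input in the whole proof is the centre-valued trace, used solely to verify the hypothesis of Proposition~\ref{prop:1101141}), and it actually establishes the stronger statement that $\frak{L}(A)\cong[M_2(A),M_2(A)]$ for every unital algebra $A$ with $Z(A)\cap[A,A]=\{0\}$, of which the finite von Neumann algebra case is a corollary. The paper's route is shorter on the page given the citation, but yours is more elementary and more general. One cosmetic point: you should also record the lower-left identity $\bigl[\matr{0}{0}{1}{0},\matr{c}{0}{0}{0}\bigr]=\matr{0}{0}{c}{0}$ (and the upper-left analogue of your third identity) to make the decomposition of a general element of $\mathcal{S}$ fully explicit, but this is routine.
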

\begin{proof}
The center valued trace of $V$ is zero on $[V,V]$ and the identity on $Z(V)$, so the theorem applies. Since $M_2(V)$ is also a finite von Neumann algebra, $[M_2(V),M_2(V)]$ coincides with the elements of $M_2(V)$ of central trace zero (by \cite[Theoreme 3.2]{FacdelaHar80}), so it remains to show that every such element has the form (\ref{eq:1103141}).  For this one can use the argument from \cite[pp. 129--130]{Meyberg72} as follows: if $\matr{a}{b}{c}{d}\in M_2(V)$ has central trace zero, then $\tr (a)=-\tr(d)$ and
\[
\matr{a}{b}{c}{d}=\matr{b'+c'}{b}{c}{-b'+c'},
\]
where $c'=(a+d)/2$ and $b'=(a-d)/2$.
\end{proof}

In a properly infinite von Neumann algebra, the assumption $Z(A)\cap [A,A]=\{0\}$ fails since $A=[A,A]$.  This assumption also fails in the Murray-von Neumann algebra of measurable operators affiliated with a factor of type $II_1$ (\cite{Thom}).  For a finite factor of type $I_n$, Corollary~\ref{cor:1101141} states that the classical Lie algebras $sl(2n,\IC)$ of type A  are TKK Lie algebras.  Similarly, the TKK Lie algebra of a Cartan factor of type 3 on an $n$-dimensional Hilbert space is the classical Lie algebra
$sp(2n,\IC)$  of type C (\cite[Theorem 3,p. 131]{Meyberg72}).
More examples
of TKK Lie algebras can be found in \cite[1.4]{chu} and \cite[Chapter III]{Koecher69}.

\section{Proofs of Theorems~\ref{m} and \ref{prop:0928141}}
\subsection{Proof of Theorem~\ref{m}}\label{app:1029141}
\ \\
For the convenience of the reader, we repeat the statement of Theorem~\ref{m}.
  \begin{thm}
   Let $V$ be a Jordan triple and let $\frak L (V)$
 be its TKK Lie algebra. Let $M$ be a triple $V$-module. Then
 $\mathcal{L} (M)$ is a Lie $\frak L(V)$-module.
 \end{thm}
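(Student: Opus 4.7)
The plan is to verify the Lie module axiom
\[
[\ell_1, \ell_2].\mu = \ell_1.(\ell_2.\mu) - \ell_2.(\ell_1.\mu)
\]
directly from the defining formulas (\ref{eq:0418152}) and (\ref{eq:0418151}). Since everything in sight is multilinear, it suffices to check the identity on generators. I would therefore write each $\ell_i$ as a sum of elements of pure grading with respect to the decomposition $\frak{L}(V) = V \oplus V_0 \oplus V$, taking its $V_0$-part (if any) as $a_i\bo b_i$; and similarly decompose $\mu = (m,0,0) + (0,\vp,0) + (0,0,n)$, further writing the middle component as a pure box operator $u\bo k$ or $k\bo u$. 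The verification then splits into a finite list of cases indexed by the grading types of $\ell_1$, $\ell_2$, and $\mu$.

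In the case where both $\ell_i$ lie in $V_0$ and the middle component of $\mu$ lies in $M_0$, the required identity is exactly the statement that $M_0$ is a $V_0$-module, which was just established in the preceding lemma. For the remaining cases, the right-hand side unwinds via (\ref{eq:0418151}) into sums of triple products with at most one entry in $M$, and equality with the left-hand side then follows from the module identity (iii) of Definition~\ref{mo}, combined with its immediate consequences already recorded in the excerpt, such as
\[
[a\bo b, u\bo m] = \{a,b,u\}\bo m - u\bo\{m,a,b\}, \qquad [a\bo b, m\bo u] = \{a,b,m\}\bo u - m\bo\{u,a,b\}.
\]

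The main obstacle is the bookkeeping of the third-coordinate entries, since the action formula involves the sharp operations $h^\natural$ and $\vp^\natural$; their well-definedness and compatibility is precisely the content of (\ref{sharp}). A labor-saving observation is that the defining formulas possess an evident symmetry swapping the first and third summands of both $\frak{L}(V)$ and $\mathcal{L}(M)$---via the involution $\theta$ on the Lie side and its obvious analogue $(m,\vp,n)\mapsto(n,-\vp^\natural,m)$ on the module side---which together with the antisymmetry $\ell_1 \leftrightarrow \ell_2$ cuts the case analysis substantially. What remains is a tedious but routine verification, each case reducing to a single application of the main Jordan triple identity with one slot specialized to lie in $M$; multilinearity then finishes the proof for arbitrary $\ell_i$ and $\mu$.
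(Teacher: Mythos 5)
Your proposal is correct and follows essentially the same route as the paper: a direct, multilinear verification of $[\ell_1,\ell_2].\mu=\ell_1.(\ell_2.\mu)-\ell_2.(\ell_1.\mu)$ on box-operator generators, with all cancellations ultimately coming from the main Jordan triple identity with one slot in $M$ (the paper writes out the first two coordinates in full and leaves the third to the reader, implicitly invoking the same $\theta$-symmetry you describe). The only caveat is that the middle-coordinate bookkeeping needs slightly more than ``a single application'' of the main identity---the paper also uses the derived identity $\tp{abc}\bo d-c\bo\tp{bad}=a\bo\tp{dcb}-\tp{cda}\bo b$---but this is a difference of effort, not of method.
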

For the proof, we  
  are required to show that
 \begin{eqnarray}\label{eq:0711141}
[(a,h,b),(c,k,d)]\cdot (m,\varphi,n)&=&
(a,h,b)\cdot ((c,k,d)\cdot (m,\varphi,n))\\
&-&(c,k,d)\cdot((a,h,b))\cdot(m,\varphi,n)).\nonumber
\end{eqnarray}
Let $L$ denote the left side of (\ref{eq:0711141}).  Then
\begin{eqnarray}\label{eq:0713144}
L&=&(\underbrace{hc-ka}_A,\underbrace{[h,k]+a\bo d-c\bo b}_H,\underbrace{k^\natural b-h^\natural d}_B)\cdot (m,\varphi,n)\\
\nonumber &=&(\underbrace{Hm-\varphi A}_{L_1},\underbrace{[H,\varphi]+A\bo n-m\bo B}_{L_2},\underbrace{\varphi^\natural B-H^\natural n}_{L_3}).
\end{eqnarray}
We can assume that $h=x\bo y$, $k=u\bo v$ so that
\begin{itemize}
\item $A=\tp{xyc}-\tp{uva}$
\item $H=\tp{xyu}\bo v-u\bo\tp{vxy}+a\bo d-c\bo b$
\item $B=\tp{vub}-\tp{yxd}$.
\end{itemize}

Let $R$ denote the right side of (\ref{eq:0711141}).
Then
\begin{eqnarray}\label{eq:0713145}
R&=&(a,h,b)\cdot (\underbrace{km-\varphi c}_C,\underbrace{[k,\varphi]+c\bo n-m\bo d}_\Phi,\underbrace{\varphi^\natural d-k^\natural n}_D)\\
\nonumber &-&(c,k,d)\cdot(\underbrace{hm-\varphi a}_{C'},\underbrace{[h,\varphi]+a\bo n-m\bo b}_{\Phi'},\underbrace{\varphi^\natural b-h^\natural n}_{D'})\\
\nonumber &=&(\underbrace{hC-\Phi a}_{R_1},\underbrace{[h,\Phi]+a\bo D-C\bo b}_{R_2},\underbrace{\Phi^\natural b-h^\natural D}_{R_3})\\
\nonumber &-&(\underbrace{kC'-\Phi'c}_{R_1'},\underbrace{[k,\Phi']+c\bo D'-C'\bo d}_{R_2'},\underbrace{\Phi'^\natural -k^\natural D'}_{R_3})\\
\nonumber &=&(R_1-R_1',R_2-R_2',R_3-R_3').
\end{eqnarray}
As above, with $h=x\bo y$, $k=u\bo v$ and with $\varphi=w\bo p + q\bo z$, with $p,q\in M$,  we have
\begin{itemize}
\item $C=\tp{uvm}-\tp{wpc}-\tp{qzc}$
\item $D=\tp{pwd}+\tp{zqd}-\tp{vun}$
\item $\Phi=\tp{uvw}\bo p-w\bo\tp{puv}+\tp{uvq}\bo z-q\bo\tp{zuv}+
c\bo n-m\bo d$
\item $C'=\tp{xym}-\tp{wpa}-\tp{qza}$
\item $D'=\tp{pwb}+\tp{zqb}-\tp{yxn}$
\item $\Phi'=\tp{xyw}\bo p-w\bo\tp{pxy}+\tp{xyq}\bo z-q\bo\tp{zxy}+
a\bo n-m\bo b$
\end{itemize}
\smallskip

We now show that $L_1=R_1-R_1'$.
We have from (\ref{eq:0713144})
\begin{eqnarray}\label{eq:0712141}
 L_1&=&Hm-\varphi A\\\nonumber
&=& \tpl{xyu}{v}{m}-\tpc{u}{vxy}{m}+\tp{adm}-\tp{cbm}
  -\tp{wpA}\\\nonumber
&&-\tp{qzA}= \underbrace{\tpl{xyu}{v}{m}}_1-\underbrace{\tpc{u}{vxy}{m}}_1+\underbrace{\tp{adm}}_5-\underbrace{\tp{cbm}}_8
\\
&&-\underbrace{\tpr{w}{p}{xyc}}_2+\underbrace{\tpr{w}{p}{uva}}_6-\underbrace{\tpr{q}{z}{xyc}}_3+\underbrace{\tpr{q}{z}{uva}}_7\nonumber
\end{eqnarray}
and from (\ref{eq:0713145})
 \begin{eqnarray}\label{eq:0712142}
 R_1&=&hC-\Phi a\\\nonumber
 &=&\underbrace{\tpr{x}{y}{uvm}}_1-  \underbrace{\tpr{x}{y}{wpc}}_2-  \underbrace{\tpr{x}{y}{qzc}}_3-  \underbrace{\tp{cna}}_4+  \underbrace{\tp{mda}}_5\\\nonumber
& &-\underbrace{\tpl{uvw}{p}{a}}_6+\underbrace{\tpc{w}{puv}{a}}_6-\underbrace{\tpl{uvq}{z}{a}}_7+\underbrace{\tpc{q}{zuv}{a}}_7\nonumber
\end{eqnarray}
 \smallskip
 and
 \begin{eqnarray}\label{eq:0712143}
R_1'&=&kC'-\Phi' c\\\nonumber
&=&  \underbrace{\tpr{u}{v}{xym}}_1-  \underbrace{\tpr{u}{v}{wpa}}_6-  \underbrace{\tpr{u}{v}{qza}}_7
-  \underbrace{\tpl{xyw}{p}{c}}_2\\
&&+  \underbrace{\tpc{w}{pxy}{c}}_2
-\underbrace{\tpl{xyq}{z}{c}}_3+\underbrace{\tpc{q}{zxy}{c}}_3-\underbrace{\tp{anc}}_4+\underbrace{\tp{mbc}}_8\nonumber
\end{eqnarray}
From (\ref{eq:0712141})-(\ref{eq:0712143}), we have $L_1=R_1-R_1'$.  In
 (\ref{eq:0712141})-(\ref{eq:0712143}) we have indicated which terms cancel. To see that the terms labeled 6 cancel, replace $\tpr{u}{v}{wpa}$ by $\tpl{uvw}{p}{a}-\tpc{w}{vup}{a}+\tpr{w}{p}{uva}$. Similarly, to see that the terms labeled 7 cancel, replace $\tpr{u}{v}{qza}$ by $\tpl{uvq}{z}{a}-\tpc{q}{vuz}{a}+\tpr{q}{z}{uva}$.\smallskip

We next show that $L_2=R_2-R_2'$.  We have from (\ref{eq:0713144})

\begin{eqnarray}\label{eq:0712144}
L_2&=&[H,\varphi]+a\bo n-m\bo B\\
\nonumber &=&[H,w\bo p+q\bo z]+\tp{xyc}\bo n-\tp{uva}\bo n-m\bo\tp{vub}+m\bo\tp{yxd}\\\nonumber
&=&[\tp{xyu}\bo v,w\bo p]-[u\bo \tp{vxy},w\bo p]+[a\bo d,w\bo p]-[c\bo b,w\bo p]\\\nonumber
&&+[\tp{xyu}\bo v,q\bo z]-[u\bo\tp{vxy},q\bo z]+[a\bo d,q\bo z]-[c\bo b,q\bo z]\\\nonumber
&&+\tp{xyc}\bo n-\tp{uva}\bo n-m\bo\tp{vub}+m\bo\tp{yxd}
\end{eqnarray}
and from (\ref{eq:0713145})
\begin{eqnarray}\label{eq:0712145}
R_2&=&[h,\Phi]+a\bo D-C\bo b\\\nonumber
&=&[x\bo y,[u\bo v,w\bo p+q\bo z]]+[x\bo y,c\bo n]-[x\bo y,m\bo d]\\\nonumber
&&+a\bo\tp{pwd}+a\bo\tp{zqd}-a\bo\tp{vun}-\tp{uvm}\bo b\\\nonumber
&&+\tp{wpc}\bo b+\tp{qzc}\bo b\\\nonumber
&=&[x\bo y,\tp{uvw}\bo p]-[x\bo y,w\bo\tp{puv}]+[x\bo y,\tp{uvq}\bo z]\\
\nonumber &&-[x\bo y,q\bo\tp{zuv}]+[x\bo y,c\bo n]-[x\bo y,m\bo d]+a\bo\tp{pwd}\\
\nonumber &&-a\bo \tp{vun}-\tp{uvm}\bo b+\tp{wpc}\bo b+\tp{qzc}\bo b,
\end{eqnarray}

and
\begin{eqnarray}\label{eq:0712146}
R'_2&=&[k,\Phi']+c\bo D'-C'\bo d\\\nonumber
&=&[u\bo v,[x\bo y,w\bo p+q\bo z]]+[u\bo v,a\bo n]-[u\bo v,m\bo b]\\\nonumber
&&+c\bo\tp{pwd}+c\bo\tp{zqb}-c\bo\tp{yxn}-\tp{xym}\bo d\\\nonumber
&&+\tp{wpa}\bo d+\tp{qza}\bo d\\\nonumber
&=&[u\bo v,\tp{xyw}\bo p]-[u\bo v,w\bo\tp{pxy}]+[u\bo v,\tp{xyq}\bo z]\\
\nonumber &&-[u\bo v,q\bo\tp{zxy}]+[u\bo v,a\bo n]-[u\bo v,m\bo b]+c\bo\tp{pwb}\\
\nonumber &&+c\bo\tp{zqb}
-\tp{xym}\bo d+\tp{wpa}\bo d+\tp{qza}\bo d-c\bo\tp{yxn}
\end{eqnarray}

From (\ref{eq:0712144}) we have
\begin{eqnarray}\label{eq:0713141}
L_2&=&\tpl{xyu}{v}{w}\bo p-w\bo\tpc{v}{xyu}{p}-\tpc{u}{vxy}{w}\bo p\\
\nonumber &&+w\bo \tpl{vxy}{u}{p}+\tp{adw}\bo p-w\bo\tp{dap}-\tp{cbw}\bo p\\
\nonumber&&+w\bo\tp{bcp}+\tpl{xyu}{v}{q}\bo z-q\bo \tpc{v}{xyu}{z}-\tpc{u}{vxy}{q}\bo z\\
\nonumber&&+q\bo \tpl{vxy}{u}{z}+\tp{adq}\bo z-q\bo \tp{daz}-\tp{cbq}\bo z+q\bo \tp{bcz}\\
\nonumber&&+\tp{xyc}\bo n-\tp{uva}\bo n-m\bo\tp{vub}+m\bo \tp{yxd}\\
\nonumber&=&(\underbrace{\tpl{xyu}{v}{w}-\tpc{u}{vxy}{w}}_1+\underbrace{\tp{adw}}_{19}-\underbrace{\tp{cbw}}_6)\bo p\\
\nonumber&&+w\bo (\underbrace{-\tpc{v}{xyu}{p}+ \tpl{vxy}{u}{p}}_2\underbrace{-\tp{dap}}_{19}+\underbrace{\tp{bcp}}_6)\\
\nonumber&&+(\underbrace{\tpl{xyu}{v}{q}-\tpc{u}{vxy}{q}}_3+\underbrace{\tp{adq}}_{20}-\underbrace{\tp{cbq}}_7)\bo z\\
\nonumber&&+q\bo(\underbrace{-\tpc{v}{xyu}{z}+ \tpl{vxy}{u}{z}}_4\underbrace{-\tp{daz}}_{20}+\underbrace{\tp{bcz}}_7)\\
\nonumber&&+(\underbrace{\tp{xyc}}_{11}-\underbrace{\tp{uva}}_9)\bo n+m\bo(-\underbrace{\tp{vub}}_{13}+\underbrace{\tp{yxd}}_{12}).
\end{eqnarray}

From (\ref{eq:0712145}) we have
\begin{eqnarray}\label{eq:0713142}
R_2&=&\underbrace{\tpr{x}{y}{uvw}}_1\bo p-\underbrace{\tp{uvw}\bo\tp{yxp}}_{15}-\underbrace{\tp{xyw}\bo \tp{puv}}_{16}\\
\nonumber&&+w\bo\underbrace{\tpr{y}{x}{puv}}_2+\underbrace{\tpr{x}{y}{uvq}}_3\bo z-\underbrace{\tp{uvq}\bo \tp{yxz}}_{17}\\
\nonumber&&-\underbrace{\tp{xyq}\bo\tp{zuv}}_{18}+q\bo \underbrace{\tpr{y}{x}{zuv}}_4+\underbrace{\tp{xyc}}_{11}\bo n-c\bo\underbrace{\tp{yxn}}_{10}\\
\nonumber&&-\underbrace{\tp{xym}}_{14}\bo d+m\bo \underbrace{\tp{yxd}}_{12}+a\bo\underbrace{\tp{pwd}}_{19}+a\bo\underbrace{\tp{zqd}}_{20}\\
\nonumber&&-a\bo \underbrace{\tp{vun}}_8-\underbrace{\tp{uvm}}_5\bo b+\underbrace{\tp{wpc}}_6\bo b+\underbrace{\tp{qzc}}_7\bo b.
\end{eqnarray}

From (\ref{eq:0712146}) we have
\begin{eqnarray}\label{eq:0713143}
R'_2&=&\underbrace{\tpr{u}{v}{xyw}}_1\bo p-\underbrace{\tp{xyw}\bo \tp{vup}}_{16}-
\underbrace{\tp{uvw}\bo\tp{pxy}}_{15}\\
\nonumber&&+w\bo\underbrace{\tpr{v}{u}{pxy}}_2+\underbrace{\tpr{u}{v}{xyq}}_3\bo z-\underbrace{\tp{xyq}\bo \tp{vuz}}_{18}\\
\nonumber&&-\underbrace{\tp{uvq}\bo\tp{yxz}}_{17}+q\bo\underbrace{\tpr{v}{u}{yxz}}_4
+\underbrace{\tp{uva}}_9\bo n-a\bo\underbrace{\tp{vun}}_8\\
\nonumber&&-\underbrace{\tp{uvm}}_5\bo b+m\bo \underbrace{\tp{vub}}_{13}+c\bo\underbrace{\tp{pwb}}_6+c\bo\underbrace{\tp{zqb}}_7\\
\nonumber&&-\underbrace{\tp{xym}}_{14}\bo d+\underbrace{\tp{wpa}}_{19}\bo d+\underbrace{\tp{qza}}_{20}\bo d-c\bo\underbrace{\tp{yxn}}_{10}.
\end{eqnarray}

From (\ref{eq:0712144})-(\ref{eq:0712146}), we have $L_2=R_2-R_2'$.  In
 (\ref{eq:0712144})-(\ref{eq:0712146}) we have indicated which terms cancel.
 The terms labeled 1--4 cancel by the main identity. The terms labeled 5 and 8-18 cancel in pairs.  The terms labeled 6,7,19,20 all cancel because of the following  identity:
 \[
 \tp{abc}\bo d-c\bo \tp{bad}=a\bo \tp{dcb}-\tp{cda}\bo b.
 \]
 which follows from the main identity
 \[
 \tpr{a}{b}{cde}-\tpr{c}{d}{abe}=\tpl{abc}{d}{e}-\tpc{c}{bad}{e}
 \]
 by interchanging $(a,b)$ with $(c,d)$ and noticing that the left side changes sign.

 It remains to show that $L_3=R_3-R_3'$.  We leave this as an exercise for the reader.

\subsection{Proof of Theorem~\ref{prop:0928141}}\label{app:1029143}
\ \\
For the reader's convenience, we repeat the statement of  Theorem~\ref{prop:0928141}, recalling that
we write $[a,b]$ for $a\bo b-b\bo a$ and $[m,a]$ for $m\bo a-a\bo m$ for $a,b \in V$ and $m\in M$.
\begin{thm}
Let $\omega$ be an extendable element of $A^3(V,M)$.   Then its
Lie extension $\frak{L}_3(\omega)$ is a Lie $3$-cocycle in
$A^3_\theta(\frak{k}(V), \frak{k}(M))$  if and only if  $\omega$
satisfies the following three  conditions:
\begin{equation}\label{eq:0930141}
[a,b]\omega(x,y,z)=\omega([a,b]x,y,z)+\omega(x,[a,b]y,z)+\omega(x,y,[a,b]z)
\end{equation}
  for all   $a,b,x,y,z  \in V$;
\begin{equation}\label{eq:0928144}
 [\omega(a,b,c),d]=[\omega(d,b,c),a]=[\omega(a,b,d),c]=[\omega(a,d,c),b]
\end{equation}
 for all $a,b,c,d
\in V$; and \begin{equation}\label{eq:1002148}
[\omega(x,y,[a,b]z),c]=0.
\end{equation}
for all $x,y,z,a,b,c \in V$.
\end{thm}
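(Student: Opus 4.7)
\medskip\noindent\textbf{Proof proposal.}
The plan is to write out $d_3\frak{L}_3(\omega)=0$ as an identity in $\mathcal{L}(M)=M\oplus M_0\oplus M$ on arbitrary quadruples of elements of $\frak{k}(V)$, and then read off the three conditions by equating the contributions to each of the three summands. Recall that $\frak{k}(V)$ is spanned by the ``diagonal'' elements $T(v):=(v,0,v)$ and by the ``antisymmetric'' middle elements $S(a,b):=(0,a\bo b-b\bo a,0)$, since any $h\in V_0$ with $h^\natural=-h$ is a linear combination of such commutators. By alternating multilinearity of $d_3\frak{L}_3(\omega)$, it suffices to test $d_3\frak{L}_3(\omega)(X_1,X_2,X_3,X_4)=0$ on quadruples in which each $X_i$ is either of type $T$ or of type $S$, giving five essentially distinct cases according to how many $T$-entries appear.

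First I would handle the all-$T$ case, plugging $X_i=T(v_i)$ into the Lie coboundary formula (\ref{eq:1015141}). Using the module action (\ref{eq:0418151}) and the extension formula (\ref{eq:1015142}), one sees that the first and third ($M$-valued) components of the resulting identity are automatic (they reduce to the alternation of $\omega$), while the middle $M_0$-component collapses to an identity asserting that $\bigl(\omega(v_{j_1},v_{j_2},v_{j_3})\bo v_{j_4}\bigr)$ is alternating in all four indices, which is precisely (\ref{eq:0928144}). Next I would take three $T$-entries and one $S$-entry, say $X_1=S(a,b)$ and $X_i=T(x_{i-1})$ for $i=2,3,4$; here the $M$-valued components contribute a single identity which, after expanding the bracket $[S(a,b),T(x)]=T([a,b]x)$, becomes the derivation-type identity (\ref{eq:0930141}).

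Finally I would treat the two-$T$, two-$S$ case $X_1=S(a,b)$, $X_2=S(c,d)$, $X_3=T(x)$, $X_4=T(y)$; the off-diagonal terms in (\ref{eq:1015141}) force a relation of the form $[\omega(x,y,[a,b]z),c]=0$, giving (\ref{eq:1002148}). The remaining cases (three or four $S$-entries) can be shown to be consequences of (\ref{eq:0930141})--(\ref{eq:1002148}) together with the Jacobi identity in $V_0$ and the main Jordan triple identity, by exactly the sort of reshuffling used in the proof of Theorem~\ref{m}. The converse direction is then obtained by reading the same calculations backwards: given (\ref{eq:0930141})--(\ref{eq:1002148}), verify that $d_3\frak{L}_3(\omega)=0$ on each of the five types of tuples, which by multilinearity implies $\frak{L}_3(\omega)\in Z^3_\theta(\frak{k}(V),\frak{k}(M))$.

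The main obstacle will be bookkeeping, not ideas: the Lie coboundary of a $3$-cochain on four arguments produces $4+\binom{4}{2}=10$ terms, each of which splits into three components of $\mathcal{L}(M)$ and further into a sum of box-operator terms in $M_0$. Separating genuine identities on $\omega$ from tautologies that follow from the main identity (as already exemplified in the proof of Theorem~\ref{m} in subsection~\ref{app:1029141}) will require the same systematic pairing-and-cancellation strategy used there; condition (\ref{eq:1002148}), which involves a commutator $[a,b]z$ inside $\omega$, is the one most likely to get lost in this process and should be isolated carefully by projecting onto the outer $M$-components before attempting to simplify the $M_0$-component.
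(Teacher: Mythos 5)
Your overall skeleton---decomposing elements of $\frak{k}(V)$ into diagonal parts $T(v)=(v,0,v)$ and middle parts $S(a,b)=(0,[a,b],0)$ and testing the cocycle identity on the five resulting types of quadruples---is exactly the paper's strategy, but you have attached the three conditions to the wrong cases, and this is not a bookkeeping slip: it breaks the forward implication. The all-diagonal case does \emph{not} yield condition (\ref{eq:0928144}). Since $\frak{L}_3(\omega)$ vanishes whenever its arguments mix a purely diagonal entry with a purely middle one, the only surviving terms in that case are the four action terms, and what you get is the single alternating sum
\[
[x_1,\omega(x_2,x_3,x_4)]-[x_2,\omega(x_1,x_3,x_4)]+[x_3,\omega(x_1,x_2,x_4)]-[x_4,\omega(x_1,x_2,x_3)]=0
\]
(equation (\ref{eq:0923146}) in the paper), which is a consequence of (\ref{eq:0928144}) but strictly weaker than it; in particular it is not the statement that $[\omega(\cdot,\cdot,\cdot),\cdot]$ is ``alternating in all four indices'' (note also that (\ref{eq:0928144}) asserts equalities \emph{without} sign changes under the relevant transpositions). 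The full condition (\ref{eq:0928144}) actually comes from the two-diagonal/two-middle case: there the only surviving term is $\frak{L}_3(\omega)\bigl([T(x_1),T(x_2)],S_3,S_4\bigr)$, whose middle component, after systematically setting subsets of the variables to zero, is equivalent to (\ref{eq:0928144}) --- not to (\ref{eq:1002148}) as you claim.

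Consequently your plan never produces (\ref{eq:1002148}) at all. In the paper that condition is extracted from the all-middle (four $S$-entries) case, and only after (\ref{eq:0930141}) and (\ref{eq:0928144}) have been used to cancel most of its terms; it is \emph{not} a consequence of the other two conditions together with the Jacobi and Jordan triple identities (otherwise it would not appear as a separate hypothesis in the theorem), so you cannot dismiss the three- and four-$S$ cases as redundant reshuffling. (The one-diagonal/three-middle case does turn out to repeat the information of the two/two case, but that has to be checked, not assumed.) To repair the argument you would need to (a) downgrade the all-diagonal case to the alternating-sum identity and note that it is absorbed by (\ref{eq:0928144}); (b) derive (\ref{eq:0928144}) from the two-diagonal/two-middle case; and (c) carry out the long reduction of the all-middle case, which is where (\ref{eq:1002148}) lives and which constitutes the bulk of the paper's proof.
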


Let $\omega\in A^3(V,M)$ be extendable and let
$\psi=d_3\frak{L}_3(\omega)$ ($\psi$ is $\theta$-invariant since 3
is odd).   Write $X_j=(x_j,a_j\bo b_j-b_j\bo a_j,x_j)\in
\frak{k}(V)$ as $X_j=(x_j,0,x_j)+(0,[a_j,b_j],0)$.
By the alternating character of $\psi$, it is a Lie 3-cocycle, that is,  $\psi(X_1,X_2,X_3,X_4)=0$ for $X_j\in\frak{k}(V)$, if and only if the following five equations hold for $a_i,b_i,x_i\in V$.
\begin{equation}\label{eq:0923141}
\psi((x_1,0,x_1),(x_2,0,x_2),(x_3,0,x_3),(x_4,0,x_4))=0,\quad\hbox{ (4 variables)}
\end{equation}
\begin{equation}\label{eq:0923142}
\psi((x_1,0,x_1),(x_2,0,x_2),(x_3,0,x_3),(0,[a_4,b_4],0))=0,\quad\hbox{ (5 variables)}
\end{equation}
\begin{equation}\label{eq:0923143}
\psi((x_1,0,x_1),(x_2,0,x_2),(0,[a_3,b_3],0),(0,[a_4,b_4],0))=0,\quad\hbox{ (6 variables)}
\end{equation}
\begin{equation}\label{eq:0923144}
\psi((x_1,0,x_1),(0,[a_2,b_2],0),(0,[a_3,b_3],0),(0,[a_4,b_4],0))=0,\quad\hbox{ (7 variables)}
\end{equation}
\begin{equation}\label{eq:0923145}
\psi((0,[a_1,b_1],0),(0,[a_2,b_2],0),(0,[a_3,b_3],0),(0,[a_4,b_4],0))=0.\quad\hbox{ (8 variables)}
\end{equation}

Note that  (\ref{eq:0930141})-(\ref{eq:1002148}) involve 5,4 and 6 variables respectively, so there is an additional amount of redundancy in
 (\ref{eq:0923141})-(\ref{eq:0923145}).
 We shall begin by showing that (\ref{eq:0923141})-(\ref{eq:0923145}) imply (\ref{eq:0930141})-(\ref{eq:1002148}).

Straightforward calculation of (\ref{eq:0923141}), using (\ref{eq:1015141}) and (\ref{eq:1015142}), shows that it is equivalent to
\begin{equation}\label{eq:0923146}
[x_1,\omega(x_2,x_3,x_4)]-[x_2,\omega(x_1,x_3,x_4)]
\end{equation}
\[
+[x_3,\omega(x_1,x_2,x_4)]-[x_4,\omega(x_1,x_2,x_3)]=0.
\]
We shall see shortly that (\ref{eq:0923146}) is redundant since it  will follow from the  identity (\ref{eq:0928144}), which will be proved using (\ref{eq:0923143}).  However, (\ref{eq:0923146}) will be used later, in the proof that  (\ref{eq:0930141})-(\ref{eq:1002148}) imply  (\ref{eq:0923141})-(\ref{eq:0923145}).

Similarly,  (\ref{eq:0923142}) is equivalent to
\[
-\tp{a_4b_4\omega(x_1,x_2,x_3)}+\tp{b_4a_4\omega(x_1,x_2,x_3)}
\]
\[
+\omega(\tp{a_4b_4x_1},x_2,x_3)-\omega(\tp{b_4a_4x_1},x_2,x_3)
\]
\[
-\omega(\tp{a_4b_4x_2},x_1,x_3)+\omega(\tp{b_4a_4x_2},x_1,x_3)
\]
\[
+\omega(\tp{a_4b_4x_3},x_1,x_2)-\omega(\tp{b_4a_4x_3},x_1,x_2)=0.
\]
which can be rewritten as
\begin{equation}\label{eq:0923147}
[a_4,b_4](\omega(x_1,x_2,x_3))=\omega([a_4,b_4]x_1,x_2,x_3)
\end{equation}
\[
+\omega(x_1,[a_4,b_4]x_2,x_3)+\omega(x_1,x_2,[a_4,b_4]x_3),
\]
proving ({\ref{eq:0930141})  (assuming only  (\ref{eq:0923142})).

An interpretation of (\ref{eq:0923147}) is that the inner triple derivation $[a,b]$ (for the triple product $\tp{\cdot,\cdot,\cdot}$ of $V$)  is also a ``triple derivation'' for the (ad hoc $M$-valued) triple product $(x,y,z)\mapsto \omega(x,y,z)$ of $V$.

In order to proceed efficiently, it is convenient to state the following formulas. First, for
$a_i$ and $b_i$ in $V$, by (\ref{eq:1015142}),
\begin{equation}\label{eq:0926141}
\frak{L}_3(\omega)((0,[a_1,b_1],0),(0,[a_2,b_2],0),(0,[a_3,b_3],0))=(0,\Lambda,0)
\end{equation}
where
\begin{eqnarray}\nonumber\label{eq:0928145}
\Lambda&=&[\omega(a_1,a_2,a_3),b_1+b_2+b_3]-[\omega(b_1,a_2,a_3),a_1+b_2+b_3]\\
&+&[\omega(b_1,b_2,a_3),a_1+a_2+b_3]-[\omega(b_1,b_2,b_3),a_1+a_2+a_3]\\\nonumber
&+&[\omega(b_1,a_2,b_3),a_1+b_2+a_3]+[\omega(a_1,b_2,b_3),b_1+a_2+a_3]\\\nonumber
&-&[\omega(a_1,b_2,a_3),b_1+a_2+b_3]-[\omega(a_1,a_2,b_3),b_1+b_2+a_3].
\end{eqnarray}
Second, for $a,b,c\in V$ and $m\in M$, by (\ref{eq:0418151}),
\begin{equation}\label{eq:0928147}
(0,[a,b],0)\cdot (0,[m,c],0)=(0,[[a,b]m,c]+[m,[a,b]c],0),
\end{equation}
and, for $a_i$ and $b_i$ in $V$, by (\ref{eq:0418152}),
\begin{equation}\label{eq:09281411}
[(0,[a_1,b_1],0),(0,[a_2,b_2],0)]=(0,[[a_1,b_1]a_2,b_2]+[[b_1,a_1]b_2,a_2] ,0).
\end{equation}

Returning to  (\ref{eq:0923143})-(\ref{eq:0923145}) and observing that
\[ \frak{L}_3(\omega) ((*, 0,*), (0,*,0), (*,*,*)) =0,\]
a straightforward calculation of (\ref{eq:0923143}) shows that it is equivalent to
\[
0=-\frak{L}_3(\omega)((0,[x_1,x_2],0),(0,[a_3,b_3],0),(0,[a_4,b_4],0)),
\]
which by (\ref{eq:0926141}) and (\ref{eq:0928145}) is equivalent to
\begin{eqnarray}\nonumber
0&=&[\omega(x_1,a_3,a_4),x_2+b_3+b_4]-[\omega(x_2,a_3,a_4),x_1+b_3+b_4]\\\label{eq:0928142}
&+&[\omega(x_2,b_3,a_4),x_1+a_3+b_4]-[\omega(x_2,b_3,b_4),x_1+a_3+a_4]\\\nonumber
&+&[\omega(x_2,a_3,b_4),x_1+b_3+a_4]+[\omega(x_1,b_3,b_4),x_2+a_3+a_4]\\\nonumber
&-&[\omega(x_1,b_3,a_4),x_2+a_3+b_4]-[\omega(x_1,a_3,b_4),x_2+b_3+a_4].
\end{eqnarray}
We shall now see that (\ref{eq:0928142}) simplifies considerably and gives the same information as
 (\ref{eq:0923144}), namely  (\ref{eq:0928142}) is equivalent to
\begin{equation}\label{eq:0928143}
[\omega(a,b,c),d]=[\omega(d,b,c),a]=[\omega(a,b,d),c]=[\omega(a,d,c),b],
\end{equation}
which is  (\ref{eq:0928144}).
Assuming that this has been done, we will have proved  that (\ref{eq:0923142}) is equivalent to  (\ref{eq:0930141}); and that  (\ref{eq:0923143}),  (\ref{eq:0923144}) and (\ref{eq:0928144}) are equivalent.  We shall  complete the proof by showing that (\ref{eq:0923145}), together with (\ref{eq:0930141}) and  (\ref{eq:0928144}), implies (\ref{eq:1002148}); and then proving that (\ref{eq:0930141})-(\ref{eq:1002148}) imply (\ref{eq:0923141})-(\ref{eq:0923145}).

Note that  (\ref{eq:0930141}), (\ref{eq:1002148}) and the alternating character of $\omega$ imply $[[a,b]\omega(x,y,z),c]=0,$ and that  (\ref{eq:0928144}) and  (\ref{eq:1002148})  imply
\begin{equation}\label{eq:1002149}
[\omega(x,y,z),[a,b]c]=0.
\end{equation}
We continue the proof of Theorem~\ref{prop:0928141} by  showing that (\ref{eq:0928143}) follows from (\ref{eq:0928142}) and that
 (\ref{eq:0923144})   does not contribute any new properties of $\omega$.  After that, we shall deal with (\ref{eq:0923145}).

Since we are assuming (\ref{eq:0923143}), we may set $x_1=0$ and $a_3=0$ in (\ref{eq:0928142}).    The result is
 \[
 [\omega(x_2,b_3,a_4),b_4]-[\omega(x_2,b_3,b_4),a_4]=0.
 \]
 If one repeats this process with ($x_1=0$ and) $a_3=0$ replaced  successivly by $a_4=0,b_3=0,b_4=0$, one obtains three more such equations.
 Next, replace $x_1=0$ by $x_2=0$ to obtain four more such equations.  Finally, setting $a_3=0$ and $a_4=0$ in
 (\ref{eq:0928142}), and repeating with $(a_3,a_4)$ replaced successively with $(a_3,b_4)$, $(b_3,a_4)$, $(b_3,b_4)$
 results in four more such equations.  By changing
 the names of the variables, the resulting twelve equations reduce to  (\ref{eq:0928143})
 (which is (\ref{eq:0928144})).

 We next show that  (\ref{eq:0923144})   yields the same information as
  (\ref{eq:0923143}).
Straightforward calculation of (\ref{eq:0923144}) shows that it is equivalent to
\[
0=(x_1,0,x_1)\cdot\frak{L}_3(\omega)((0,[a_2,b_2],0),(0,[a_3,b_3],0),(0,[a_4,b_4],0)),
\]
which by (\ref{eq:0926141}) equals $(x_1,0,x_1)\cdot(0,\Lambda,0)=(-\Lambda x_1,0,-\Lambda x_1)$ where
\begin{eqnarray}\nonumber\label{eq:0926142}
\Lambda&=&[\omega(a_2,a_3,a_4),b_2+b_3+b_4]-[\omega(b_2,a_3,a_4),a_2+b_3+b_4]\\
&+&[\omega(b_2,b_3,a_4),a_2+a_3+b_4]-[\omega(b_2,b_3,b_4),a_2+a_3+a_4]\\\nonumber
&+&[\omega(b_2,a_3,b_4),a_2+b_3+a_4]+[\omega(a_2,b_3,b_4),b_2+a_3+a_4]\\\nonumber
&-&[\omega(a_2,b_3,a_4),b_2+a_3+b_4]-[\omega(a_2,a_3,b_4),b_2+b_3+a_4],
\end{eqnarray}
Thus,  (\ref{eq:0923144})  results in
\begin{equation}
\Lambda x_1=0.
\end{equation}
where $\Lambda$ is given by (\ref{eq:0926142}).   Comparing this with (\ref{eq:0928142}) shows that
(\ref{eq:0923144}) is equivalent to (\ref{eq:0923143}).

We now have that (\ref{eq:0923143}), (\ref{eq:0923144}), (\ref{eq:0928142}), (\ref{eq:0928143}) and (\ref{eq:0928144}) are equivalent, and that (\ref{eq:0923142}) and  (\ref{eq:0930141}) are equivalent.
It remains, for this part of the proof,  to establish (\ref{eq:1002148}) using (\ref{eq:0923141})-(\ref{eq:0923145}).  This will take some perseverance!

In order to process  (\ref{eq:0923145}) we shall adopt the following self-explanatory notation.
For distinct elements $i,j,k,l\in\{1,2,3,4\}$, set
\begin{equation}\label{eq:09281410}
ijkl_1=(0,[a_i,b_i],0)\cdot \frak{L}_3(\omega)((0,[a_j,b_j],0),(0,[a_k,b_k],0),(0,[a_l,b_l],0))
\end{equation}
and
\begin{equation}\label{eq:09281412}
ijkl_2=\frak{L}_3(\omega)([(0,[a_i,b_i],0),(0,[a_j,b_j],0)],(0,[a_k,b_k],0),(0,[a_l,b_l],0)).
\end{equation}
Then equation  (\ref{eq:0923145}) for $\psi=d_3\frak{L}_3(\omega)$ is restated as:
  \begin{equation}\label{eq:0929141}
 0=1234_1-2134_1+3124_1-4123_1
 \end{equation}
 \[
-1234_2+1324_2-1423_2-2314_2+2413_2-3412_2.
 \]
By   (\ref{eq:09281410}), using (\ref{eq:0926141})-(\ref{eq:0928145}),
\[
ijkl_1=(0,[a_i,b_i],0)\cdot (0,\Lambda_{j,k,l},0)
\]
where
\begin{eqnarray}\nonumber\label{eq:0928146}
\Lambda_{j,k,l}&=&[\omega(a_ j,a_ k,a_l),b_ j+b_ k+b_l]-[\omega(b_ j,a_ k,a_l),a_ j+b_ k+b_l]\\
&+&[\omega(b_ j,b_ k,a_l),a_ j+a_ k+b_l]-[\omega(b_ j,b_ k,b_l),a_ j+a_ k+a_l]\\\nonumber
&+&[\omega(b_ j,a_ k,b_l),a_ j+b_ k+a_l]+[\omega(a_ j,b_ k,b_l),b_ j+a_ k+a_l]\\\nonumber
&-&[\omega(a_ j,b_ k,a_l),b_ j+a_ k+b_l]-[\omega(a_ j,a_ k,b_l),b_ j+b_ k+a_l],
\end{eqnarray}
and by (\ref{eq:0928147}),
\begin{equation}\label{eq:0928149}
ijkl_1=(0,\Gamma_{i,j,k,l},0)
\end{equation}
where
\begin{eqnarray}\nonumber\label{eq:0928148}
\Gamma_{i,j,k,l}&=&[[a_i,b_i]\omega(a_ j,a_ k,a_l),b_ j+b_ k+b_l]+[\omega(a_ j,a_ k,a_l),[a_i,b_i](b_ j+b_ k+b_l)]\\\nonumber
&-&[[a_i,b_i]\omega(b_ j,a_ k,a_l),a_ j+b_ k+b_l]-[\omega(b_ j,a_ k,a_l),[a_i,b_i](a_ j+b_ k+b_l)]\\\nonumber
&+&[[a_i,b_i]\omega(b_ j,b_ k,a_l),a_ j+a_ k+b_l]+[\omega(b_ j,b_ k,a_l),[a_i,b_i](a_ j+a_ k+b_l)]\\
&-&[[a_i,b_i]\omega(b_ j,b_ k,b_l),a_ j+a_ k+a_l]-[\omega(b_ j,b_ k,b_l),[a_i,b_i](a_ j+a_ k+a_l)]\\\nonumber
&+&[[a_i,b_i]\omega(b_ j,a_ k,b_l),a_ j+b_ k+a_l]+[\omega(b_ j,a_ k,b_l),[a_i,b_i](a_ j+b_ k+a_l)]\\\nonumber
&+&[[a_i,b_i]\omega(a_ j,b_ k,b_l),b_ j+a_ k+a_l]+[\omega(a_ j,b_ k,b_l),[a_i,b_i](b_ j+a_ k+a_l)]\\\nonumber
&-&[[a_i,b_i]\omega(a_ j,b_ k,a_l),b_ j+a_ k+b_l]-[\omega(a_ j,b_ k,a_l),[a_i,b_i](b_ j+a_ k+b_l)]\\\nonumber
&-&[[a_i,b_i]\omega(a_ j,a_ k,b_l),b_ j+b_ k+a_l]-[\omega(a_ j,a_ k,b_l),[a_i,b_i](b_ j+b_ k+a_l)]\nonumber.
\end{eqnarray}

By    (\ref{eq:09281412}), using
(\ref{eq:09281411}) and  (\ref{eq:0926141})-(\ref{eq:0928145}),
\begin{eqnarray}\nonumber\label{eq:09281413}
ijkl_2&=&\frak{L}_3(\omega)((0,[[a_i,b_i]a_j,b_j]+[[b_i,a_i]b_j,a_j],0),(0,[a_k,b_k],0),(0,[a_l,b_l],0))\\
&=&(0,\Delta_{i,j,k,l},0),
\end{eqnarray}
where
\begin{eqnarray}\nonumber\label{eq:09281414}
\Delta_{i,j,k,l}&=&[\omega([a_i,b_i]a_ j,a_ k,a_l),b_ j+b_ k+b_l]+[\omega([b_i,a_i]b_ j,a_ k,a_l),a_ j+b_ k+b_l)]\\\nonumber
&-&[\omega(b_ j,a_ k,a_l),[a_i,b_i]a_ j+b_ k+b_l]-[\omega(a_ j,a_ k,a_l),[b_i,a_i](b_ j+b_ k+b_l)]\\\nonumber
&+&[\omega(b_ j,b_ k,a_l),[a_i,b_i]a_ j+a_ k+b_l]+[\omega(a_ j,b_ k,a_l),[b_i,a_i](b_ j+a_ k+b_l)]\\
&-&[\omega(b_ j,b_ k,b_l),[a_i,b_i]a_ j+a_ k+a_l]-[\omega(a_ j,b_ k,b_l),[b_i,a_i](b_ j+a_ k+a_l)]\\\nonumber
&+&[\omega(b_ j,a_ k,b_l),[a_i,b_i]a_ j+b_ k+a_l]+[\omega(a_ j,a_ k,b_l),[b_i,a_i](b_ j+b_ k+a_l)]\\\nonumber
&+&[\omega([a_i,b_i]a_ j,b_ k,b_l),b_ j+a_ k+a_l]+[\omega([b_i,a_i]b_ j,b_ k,b_l),a_ j+a_ k+a_l]\\\nonumber
&-&[\omega([a_i,b_i]a_ j,b_ k,a_l),b_ j+a_ k+b_l]-[\omega([b_i,a_i]b_ j,b_ k,a_l),a_ j+a_ k+b_l]\\\nonumber
&-&[\omega([a_i,b_i]a_ j,a_ k,b_l),b_ j+b_ k+a_l]-[\omega([b_i,a_i]b_ j,a_ k,b_l),a_ j+b_ k+a_l].\nonumber
\end{eqnarray}

We next analyze (\ref{eq:0928148}) and (\ref{eq:09281414}).  First, applying  (\ref{eq:0930141}) to the first bracket on each line of  (\ref{eq:0928148}) and applying (\ref{eq:0928144}) to the expansion of
 those brackets results in 72 terms, 24 of which cancel with all of the terms in the second bracket on each line of  (\ref{eq:0928148}).  Thus the 96 terms in
(\ref{eq:0928148})  are reduced to the 48 terms in

\begin{equation}\label{eq:1001141}
\Gamma_{i,j,k,l}=
\end{equation}
\begin{eqnarray*}\label{eq:0930142}
&&[\omega([a_i,b_i]a_ j,a_ k,a_l),(b_ k+b_l)]
+[\omega(a_ j,[a_i,b_i]a_ k,a_l),(b_ j+b_l)]
+[\omega(a_ j,a_ k,[a_i,b_i]a_l),(b_ j+b_k)]\\\nonumber
&-&[\omega([a_i,b_i]b_ j,a_ k,a_l),(b_ k+b_l)]
-[\omega(b_ j,[a_i,b_i]a_ k,a_l),(a_ j+b_l)]
-[\omega(b_ j,a_ k,[a_i,b_i]a_l),(a_ j+b_k)]\\\nonumber
&+&[\omega([a_i,b_i]b_ j,b_ k,a_l),(a_ k+b_l)]
+[\omega(b_ j,[a_i,b_i]b_ k,a_l),(a_ j+b_l)]\nonumber
+[\omega(b_ j,b_ k,[a_i,b_i]a_l),(a_ j+a_k)]\nonumber\\
&-&[\omega([a_i,b_i]b_ j,b_ k,b_l),(a_ k+a_l)]
-[\omega(b_ j,[a_i,b_i]b_ k,b_l),(a_ j+a_l)]\nonumber
-[\omega(b_ j,b_ k,[a_i,b_i]b_l),(a_ j+a_k)]\\\nonumber
&+&[\omega([a_i,b_i]b_ j,a_ k,b_l),(b_ k+a_l)]\nonumber
+[\omega(b_ j,[a_i,b_i]a_ k,b_l),(a_ j+a_l)]\nonumber
+[\omega(b_ j,a_ k,[a_i,b_i]b_l),(a_ j+b_k)]\\\nonumber
&-&[\omega([a_i,b_i]a_ j,b_ k,b_l),(a_ k+a_l)]\nonumber
-[\omega(a_ j,[a_i,b_i]b_ k,b_l),(b_ j+a_l)]\nonumber
-[\omega(a_ j,b_ k,[a_i,b_i]b_l),(b_ j+a_k)]\\\nonumber
&+&[\omega([a_i,b_i]a_ j,b_ k,a_l),(a_ k+b_l)]\nonumber
+[\omega(a_ j,[a_i,b_i]b_ k,a_l),(b_ j+b_l)]\nonumber
+[\omega(a_ j,b_ k,[a_i,b_i]a_l),(b_ j+a_k)]\\\nonumber
&-&[\omega([a_i,b_i]a_ j,a_ k,b_l),(b_ k+a_l)]\nonumber
-[\omega(a_ j,[a_i,b_i]a_ k,b_l),(b_ j+a_l)]\nonumber
-[\omega(a_ j,a_ k,[a_i,b_i]b_l),(b_ j+b_k)].
\end{eqnarray*}

Second, the 8 first brackets on the lines of  (\ref{eq:09281414}) sum to zero, as can be seen by expanding and noting that the resulting terms cancel in pairs by applying (\ref{eq:0928144}).
Thus  (\ref{eq:09281414})  reduces (initially)  to the sum of the 8 second brackets on the lines of   (\ref{eq:09281414}), namely,
\begin{eqnarray}\nonumber\label{eq:0930143}
\Delta_{i,j,k,l}&=&[\omega([b_i,a_i]b_ j,a_ k,a_l),a_ j+b_ k+b_l)]\\\nonumber
&-&[\omega(a_ j,a_ k,a_l),[b_i,a_i](b_ j+b_ k+b_l)]\\\nonumber
&+&[\omega(a_ j,b_ k,a_l),[b_i,a_i](b_ j+a_ k+b_l)]\\
&-&[\omega(a_ j,b_ k,b_l),[b_i,a_i](b_ j+a_ k+a_l)]\\\nonumber
&+&[\omega(a_ j,a_ k,b_l),[b_i,a_i](b_ j+b_ k+a_l)]\\\nonumber
&+&[\omega([b_i,a_i]b_ j,b_ k,b_l),a_ j+a_ k+a_l]\\\nonumber
&-&[\omega([b_i,a_i]b_ j,b_ k,a_l),a_ j+a_ k+b_l]\\\nonumber
&-&[\omega([b_i,a_i]b_ j,a_ k,b_l),a_ j+b_ k+a_l].\nonumber
\end{eqnarray}
However, there is still more cancellation in (\ref{eq:0930143}) using (\ref{eq:0928144}), and what remains is
\begin{eqnarray}\nonumber\label{eq:0930144}
\Delta_{i,j,k,l}&=&-[\omega(a_ j,a_ k,a_l),[b_i,a_i](b_ k+b_l)]\\\nonumber
&&+[\omega(a_ j,b_ k,a_l),[b_i,a_i](a_ k+b_l)]\\
&&-[\omega(a_ j,b_ k,b_l),[b_i,a_i](a_ k+a_l)]\\\nonumber
&&+[\omega(a_ j,a_ k,b_l),[b_i,a_i](b_ k+a_l)]\\\nonumber.
\end{eqnarray}

The equation  (\ref{eq:0929141}) is thus equivalent to
  \begin{equation}\label{eq:0930145}
 0=\Gamma_{1234}-\Gamma_{2134}+\Gamma_{3124}-\Gamma_{4123}
 \end{equation}
 \[
-\Delta_{1234}+\Delta_{1324}-\Delta_{1423}-\Delta_{2314}+\Delta_{2413}-\Delta_{3412},
 \]
where $\Gamma_{ijkl}$ and $\Delta_{ijkl}$ are given by   (\ref{eq:1001141}) and   (\ref{eq:0930144}).

 We are now going to decompose each term in (\ref{eq:0930145}) into ``irreducible pieces'' as follows. First some notation.   Let $\Sigma$ denote the right side of (\ref{eq:0930145}), let $\Gamma_{ijkl}(a_1=0)$ denote the sum of the terms of $\Gamma_{ijkl}$ which do not involve the variable $a_1$, and
 $\Gamma_{ijkl}(a_1\ne 0)$ the sum of the  terms of
 $\Gamma_{ijkl}$ which contain the variable $a_1$, with similar notation for other variables, for more then one variable, and for $\Delta_{ijkl}$.
With $\Sigma(a_1=0)$ denoting the sum of the terms of $\Sigma$ not containing $a_1$, etc.,
we have ({\it and this is the first of two underlying principles in what follows}) $\Sigma=0$ if and only if $\Sigma(a_1=0)=0$ and $\Sigma(a_1\ne 0)=0$.

We shall use (\ref{eq:1001141}) to process the $\Gamma_{ijkl}$ in (\ref{eq:0930145}) and in parallel use  (\ref{eq:0930144}) to process the $\Delta_{ijkl}$ in (\ref{eq:0930145}).  Here we go!
By  (\ref{eq:1001141}),
 \begin{equation}\label{eq:1001142}
\Gamma_{i,j,k,l}(a_i=0)=0,
\end{equation}
 \begin{equation}\label{eq:1001143}
\Gamma_{i,j,k,l}(a_j=0)=
\end{equation}
\begin{eqnarray*}
&-&[\omega([a_i,b_i]b_ j,a_ k,a_l),(b_ k+b_l)]
-[\omega(b_ j,[a_i,b_i]a_ k,a_l),b_l]
-[\omega(b_ j,a_ k,[a_i,b_i]a_l),b_k]\\\nonumber
&+&[\omega([a_i,b_i]b_ j,b_ k,a_l),(a_ k+b_l)]
+[\omega(b_ j,[a_i,b_i]b_ k,a_l),b_l]\nonumber
+[\omega(b_ j,b_ k,[a_i,b_i]a_l),a_k]\nonumber\\
&-&[\omega([a_i,b_i]b_ j,b_ k,b_l),(a_ k+a_l)]
-[\omega(b_ j,[a_i,b_i]b_ k,b_l),a_l]\nonumber
-[\omega(b_ j,b_ k,[a_i,b_i]b_l),a_k]\\\nonumber
&+&[\omega([a_i,b_i]b_ j,a_ k,b_l),(b_ k+a_l)]\nonumber
+[\omega(b_ j,[a_i,b_i]a_ k,b_l),a_l]\nonumber
+[\omega(b_ j,a_ k,[a_i,b_i]b_l),b_k],\nonumber
\end{eqnarray*}
 \begin{equation}\label{eq:1001144}
\Gamma_{i,j,k,l}(a_k=0)=
\end{equation}
\begin{eqnarray*}
&&[\omega([a_i,b_i]b_ j,b_ k,a_l),b_l]
+[\omega(b_ j,[a_i,b_i]b_ k,a_l),(a_ j+b_l)]\nonumber
+[\omega(b_ j,b_ k,[a_i,b_i]a_l),a_ j]\nonumber\\
&-&[\omega([a_i,b_i]b_ j,b_ k,b_l),a_l]
-[\omega(b_ j,[a_i,b_i]b_ k,b_l),(a_ j+a_l)]\nonumber
-[\omega(b_ j,b_ k,[a_i,b_i]b_l),a_ j]\\\nonumber
&-&[\omega([a_i,b_i]a_ j,b_ k,b_l),a_l]\nonumber
-[\omega(a_ j,[a_i,b_i]b_ k,b_l),(b_ j+a_l)]\nonumber
-[\omega(a_ j,b_ k,[a_i,b_i]b_l),b_ j]\\\nonumber
&+&[\omega([a_i,b_i]a_ j,b_ k,a_l),b_l]\nonumber
+[\omega(a_ j,[a_i,b_i]b_ k,a_l),(b_ j+b_l)]\nonumber
+[\omega(a_ j,b_ k,[a_i,b_i]a_l),b_ j], \\\nonumber
\end{eqnarray*}
and
\begin{equation}\label{eq:1001145}
\Gamma_{i,j,k,l}(a_l=0)=
\end{equation}
\begin{eqnarray*}
&-&[\omega([a_i,b_i]b_ j,b_ k,b_l),a_ k]
-[\omega(b_ j,[a_i,b_i]b_ k,b_l),a_ j]\nonumber
-[\omega(b_ j,b_ k,[a_i,b_i]b_l),(a_ j+a_k)]\\\nonumber
&+&[\omega([a_i,b_i]b_ j,a_ k,b_l),b_ k]\nonumber
+[\omega(b_ j,[a_i,b_i]a_ k,b_l),a_ j]\nonumber
+[\omega(b_ j,a_ k,[a_i,b_i]b_l),(a_ j+b_k)]\\\nonumber
&-&[\omega([a_i,b_i]a_ j,b_ k,b_l),a_ k]\nonumber
-[\omega(a_ j,[a_i,b_i]b_ k,b_l),b_ j]\nonumber
-[\omega(a_ j,b_ k,[a_i,b_i]b_l),(b_ j+a_k)]\\\nonumber
&-&[\omega([a_i,b_i]a_ j,a_ k,b_l),b_ k]\nonumber
-[\omega(a_ j,[a_i,b_i]a_ k,b_l),b_ j]\nonumber
-[\omega(a_ j,a_ k,[a_i,b_i]b_l),(b_ j+b_k)].
\end{eqnarray*}
On the other hand, by (\ref{eq:0930144}),
\begin{equation}\label{eq:1001146}
\Delta_{i,j,k,l}(a_i=0)=0,
\end{equation}
\begin{equation}\label{eq:1001147}
\Delta_{i,j,k,l}(a_j=0)=0,
\end{equation}
\begin{eqnarray}\label{eq:1001148}
\Delta_{i,j,k,l}(a_k=0)&=&[\omega(a_ j,b_ k,a_l),[b_i,a_i]b_l]\\\nonumber
&-&[\omega(a_ j,b_ k,b_l),[b_i,a_i]a_l],
\end{eqnarray}
and
\begin{eqnarray}\label{eq:1001149}
\Delta_{i,j,k,l}(a_l=0)&=&-[\omega(a_ j,b_ k,b_l),[b_i,a_i]a_ k]\\
&&+[\omega(a_ j,a_ k,b_l),[b_i,a_i]b_ k]\nonumber.
\end{eqnarray}
Returning to  (\ref{eq:1001141}), by (\ref{eq:1001142})
\begin{equation}\label{eq:10011410}
\Gamma_{1234}(a_1=0)=0.
\end{equation}
By (\ref{eq:1001143})
 \begin{equation}\label{eq:10011411}
\Gamma_{2134}(a_1=0)=
\end{equation}
\begin{eqnarray*}
&-&[\omega([a_2,b_2]b_ 1,a_ 3,a_4),(b_ 3+b_4)]
-[\omega(b_ 1,[a_2,b_2]a_ 3,a_4),b_4]
-[\omega(b_ 1,a_ 3,[a_2,b_2]a_4),b_3]\\\nonumber
&+&[\omega([a_2,b_2]b_ 1,b_ 3,a_4),(a_ 3+b_4)]
+[\omega(b_ 1,[a_2,b_2]b_ 3,a_4),b_4]\nonumber
+[\omega(b_ 1,b_ 3,[a_2,b_2]a_4),a_3]\nonumber\\
&-&[\omega([a_2,b_2]b_ 1,b_ 3,b_4),(a_ 3+a_4)]
-[\omega(b_ 1,[a_2,b_2]b_ 3,b_4),a_4]\nonumber
-[\omega(b_ 1,b_ 3,[a_2,b_2]b_4),a_3]\\\nonumber
&+&[\omega([a_2,b_2]b_ 1,a_ 3,b_4),(b_ 3+a_4)]\nonumber
+[\omega(b_ 1,[a_2,b_2]a_ 3,b_4),a_4]\nonumber
+[\omega(b_ 1,a_ 3,[a_2,b_2]b_4),b_3], \nonumber
\end{eqnarray*}
 \begin{equation}\label{eq:10011413}
\Gamma_{3124}(a_1=0)=
\end{equation}
\begin{eqnarray*}
&-&[\omega([a_3,b_3]b_ 1,a_ 2,a_4),(b_ 2+b_4)]
-[\omega(b_ 1,[a_3,b_3]a_ 2,a_4),b_4]
-[\omega(b_ 1,a_ 2,[a_3,b_3]a_4),b_2]\\\nonumber
&+&[\omega([a_3,b_3]b_ 1,b_ 2,a_4),(a_ 2+b_4)]
+[\omega(b_ 1,[a_3,b_3]b_ 2,a_4),b_4]\nonumber
+[\omega(b_ 1,b_ 2,[a_3,b_3]a_4),a_2]\nonumber\\
&-&[\omega([a_3,b_3]b_ 1,b_ 2,b_4),(a_ 2+a_4)]
-[\omega(b_ 1,[a_3,b_3]b_ 2,b_4),a_4]\nonumber
-[\omega(b_ 1,b_ 2,[a_3,b_3]b_4),a_2]\\\nonumber
&+&[\omega([a_3,b_3]b_ 1,a_ 2,b_4),(b_ 2+a_4)]\nonumber
+[\omega(b_ 1,[a_3,b_3]a_ 2,b_4),a_4]\nonumber
+[\omega(b_ 1,a_ 2,[a_3,b_3]b_4),b_2]\nonumber,
\end{eqnarray*}
and
 \begin{equation}\label{eq:10011414}
\Gamma_{4123}(a_1=0)=
\end{equation}
\begin{eqnarray*}
&-&[\omega([a_4,b_4]b_ 1,a_ 2,a_3),(b_ 2+b_3)]
-[\omega(b_ 1,[a_4,b_4]a_ 2,a_3),b_3]
-[\omega(b_ 1,a_ 2,[a_4,b_4]a_3),b_2]\\\nonumber
&+&[\omega([a_4,b_4]b_ 1,b_ 2,a_3),(a_ 2+b_3)]
+[\omega(b_ 1,[a_4,b_4]b_ 2,a_3),b_3]\nonumber
+[\omega(b_ 1,b_ 2,[a_4,b_4]a_3),a_2]\nonumber\\
&-&[\omega([a_4,b_4]b_ 1,b_ 2,b_3),(a_ 2+a_3)]
-[\omega(b_ 1,[a_4,b_4]b_ 2,b_3),a_3]\nonumber
-[\omega(b_ 1,b_ 2,[a_4,b_4]b_3),a_2]\\\nonumber
&+&[\omega([a_4,b_4]b_ 1,a_ 2,b_3),(b_ 2+a_3)]\nonumber
+[\omega(b_ 1,[a_4,b_4]a_ 2,b_3),a_3]\nonumber
+[\omega(b_ 1,a_ 2,[a_4,b_4]b_3),b_2].\nonumber
\end{eqnarray*}
On the other hand, by (\ref{eq:1001146})
\begin{equation}\label{eq:10011415}
\Delta_{1234}(a_1=0)=0,\quad \Delta_{1324}(a_1=0)=0,\quad \Delta_{1423}(a_1=0)=0.
\end{equation}
By (\ref{eq:1001148}),
\begin{eqnarray}\label{eq:10011416}
\Delta_{2314}(a_1=0)&=&[\omega(a_ 3,b_ 1,a_4),[b_2,a_2]b_4]\\\nonumber
&-&[\omega(a_ 3,b_ 1,b_4),[b_2,a_2]a_4],
\end{eqnarray}
\begin{eqnarray}\label{eq:10011417}
\Delta_{2413}(a_1=0)&=&[\omega(a_ 4,b_ 1,a_3),[b_2,a_2]b_3]\\\nonumber
&-&[\omega(a_ 4,b_ 1,b_3),[b_2,a_2]a_3],
\end{eqnarray}
and
\begin{eqnarray}\label{eq:10011418}
\Delta_{2413}(a_1=0)&=&[\omega(a_ 4,b_ 1,a_2),[b_3,a_3]b_2]\\\nonumber
&-&[\omega(a_ 4,b_ 1,b_2),[b_3,a_3]a_2].
\end{eqnarray}

By (\ref{eq:0930145}), and  (\ref{eq:10011410})-(\ref{eq:10011418}),
\[
0=\Sigma(a_1=0)=-(\ref{eq:10011411})+(\ref{eq:10011413})-(\ref{eq:10011414})
-(\ref{eq:10011416})+(\ref{eq:10011417})-(\ref{eq:10011418}),
\]
and each of the terms on the right side must be decomposed further.  Here, we are using the notation
(\ref{eq:10011411}) to denote $\Gamma_{2134}(a_1=0)$ and similarly for  (\ref{eq:10011413}), etc.

We shall analyze (\ref{eq:10011411}) first.
By (\ref{eq:10011411}),
 \begin{equation}\label{eq:1002141}
\Gamma_{2134}(a_1=0,a_3=0)=
\end{equation}
\begin{eqnarray*}\label{eq:10011412}\nonumber
&+&[\omega([a_2,b_2]b_ 1,b_ 3,a_4),b_4]
+[\omega(b_ 1,[a_2,b_2]b_ 3,a_4),b_4]\nonumber\\
&-&[\omega([a_2,b_2]b_ 1,b_ 3,b_4),a_4]
-[\omega(b_ 1,[a_2,b_2]b_ 3,b_4),a_4]\nonumber
\end{eqnarray*}
and
 \begin{equation}\label{eq:1002142}
\Gamma_{2134}(a_1=0,a_3\ne 0)=
\end{equation}
\begin{eqnarray*}
&-&[\omega([a_2,b_2]b_ 1,a_ 3,a_4),(b_ 3+b_4)]
-[\omega(b_ 1,[a_2,b_2]a_ 3,a_4),b_4]
-[\omega(b_ 1,a_ 3,[a_2,b_2]a_4),b_3]\\\nonumber
&+&[\omega([a_2,b_2]b_ 1,b_ 3,a_4),a_ 3]
+[\omega(b_ 1,b_ 3,[a_2,b_2]a_4),a_3]\nonumber\\
&-&[\omega([a_2,b_2]b_ 1,b_ 3,b_4),a_ 3]
-[\omega(b_ 1,b_ 3,[a_2,b_2]b_4),a_3]\\\nonumber
&+&[\omega([a_2,b_2]b_ 1,a_ 3,b_4),(b_ 3+a_4)]\nonumber
+[\omega(b_ 1,[a_2,b_2]a_ 3,b_4),a_4]\nonumber
+[\omega(b_ 1,a_ 3,[a_2,b_2]b_4),b_3].\nonumber
\end{eqnarray*}
The identity given by (\ref{eq:1002141}) is ``irreducible'' in the sense that if any of its variables is zero, then it vanishes identically ({\it This is the second of the two underlying principles mentioned earlier}).  However, since it is a consequence of (\ref{eq:0928144}), it does not give any new identities and can be ignored.  We proceed to decompose (\ref{eq:1002142}) as follows.
 \begin{equation}\label{eq:1002143}
\Gamma_{2134}(a_1=0,a_3\ne 0,a_4=0)=
\end{equation}
\begin{eqnarray*}
&-&[\omega([a_2,b_2]b_ 1,b_ 3,b_4),a_ 3]
-[\omega(b_ 1,b_ 3,[a_2,b_2]b_4),a_3]\\\nonumber
&+&[\omega([a_2,b_2]b_ 1,a_ 3,b_4),b_ 3]\nonumber
+[\omega(b_ 1,a_ 3,[a_2,b_2]b_4),b_3].\nonumber
\end{eqnarray*}
 \begin{equation}\label{eq:1002144}
\Gamma_{2134}(a_1=0,a_3\ne 0,a_4\ne 0)=
\end{equation}
\begin{eqnarray*}
&-&[\omega([a_2,b_2]b_ 1,a_ 3,a_4),(b_ 3+b_4)]
-[\omega(b_ 1,[a_2,b_2]a_ 3,a_4),b_4]
-[\omega(b_ 1,a_ 3,[a_2,b_2]a_4),b_3]\\\nonumber
&+&[\omega([a_2,b_2]b_ 1,b_ 3,a_4),a_ 3]
+[\omega(b_ 1,b_ 3,[a_2,b_2]a_4),a_3]\nonumber\\
&+&[\omega([a_2,b_2]b_ 1,a_ 3,b_4),a_4]\nonumber
+[\omega(b_ 1,[a_2,b_2]a_ 3,b_4),a_4]\nonumber
\end{eqnarray*}
The identity given by (\ref{eq:1002143}) is irreducible and can also be ignored, so we proceed to decompose (\ref{eq:1002144}) as follows.
 \begin{equation}\label{eq:1002145}
\Gamma_{2134}(a_1=0,a_3\ne 0,a_4\ne 0,b_3=0)=
\end{equation}
\begin{eqnarray*}
&-&[\omega([a_2,b_2]b_ 1,a_ 3,a_4),b_4]
-[\omega(b_ 1,[a_2,b_2]a_ 3,a_4),b_4]
\\
&+&[\omega(b_ 1,b_ 3,[a_2,b_2]a_4),a_3]\nonumber\\
&+&[\omega([a_2,b_2]b_ 1,a_ 3,b_4),a_4]\nonumber
+[\omega(b_ 1,[a_2,b_2]a_ 3,b_4),a_4],\nonumber
\end{eqnarray*}
and
\begin{equation}\label{eq:1002146}
\Gamma_{2134}(a_1=0,a_3\ne 0,a_4\ne 0,b_3\ne 0)=
\end{equation}
\begin{eqnarray*}
&-&[\omega([a_2,b_2]b_ 1,a_ 3,a_4),b_ 3]
\\
&+&[\omega([a_2,b_2]b_ 1,b_ 3,a_4),a_ 3]
+[\omega(b_ 1,b_ 3,[a_2,b_2]a_4),a_3].\nonumber
\end{eqnarray*}
By using (\ref{eq:0928144}), each of (\ref{eq:1002145}) and (\ref{eq:1002146}) gives the  new identity
\begin{equation}\label{eq:1002147}
[\omega(b_ 1,b_ 3,[a_2,b_2]a_4),a_3]=0,
\end{equation}
which establishes (\ref{eq:1002148}), and at the same time shows that (\ref{eq:10011416}), (\ref{eq:10011417}) and (\ref{eq:10011418}) produce no new identities.

This completes the analysis of (\ref{eq:10011411}), which has produced  (\ref{eq:1002148}).
Since  (\ref{eq:10011413})  is obtained from  (\ref{eq:10011411}) by interchanging the indices 3 and 2, no new information is provided by  (\ref{eq:10011413}).  Similarly, since  (\ref{eq:10011414})  is obtained from  (\ref{eq:10011413}) by interchanging the indices 3 and 4, no new information is provided by  (\ref{eq:10011414}).
 Thus we have found all irreducible expressions which sum to $\Sigma(a_1=0)$, resulting in only one identity, namely  (\ref{eq:1002148}).
This completes the proof  that
(\ref{eq:0923141})-(\ref{eq:0923145}) imply
  (\ref{eq:0930141})-(\ref{eq:1002148}).
(See the paragraph following (\ref{eq:0930145}).)

It is now a simple matter to prove that, conversely,   (\ref{eq:0930141})-(\ref{eq:1002148}) imply
(\ref{eq:0923141})-(\ref{eq:0923145}).
Note that by (\ref{eq:1002148}),
(\ref{eq:1002149}), and (\ref{eq:0928148}),(\ref{eq:09281414}), $\Gamma_{ijkl}$ and $\Delta_{ijkl}$ vanish, showing that   $\Sigma(a_1\ne 0)=0$,
hence   (\ref{eq:0930141})-(\ref{eq:1002148}) imply
(\ref{eq:0923145}).
Since earlier arguments  have shown that \begin{itemize}
\item (\ref{eq:0928144}) $\Rightarrow$ (\ref{eq:0923146}) $\Leftrightarrow$ (\ref{eq:0923141}),
\item
(\ref{eq:0930141}) = (\ref{eq:0923147}) $\Leftrightarrow$ (\ref{eq:0923142}),
\item (\ref{eq:0928144}) $\Leftrightarrow$ (\ref{eq:0928142}) $\Leftrightarrow$ (\ref{eq:0923143})$\Leftrightarrow$ (\ref{eq:0923144}),
\end{itemize}
this completes the proof that  (\ref{eq:0930141})-(\ref{eq:1002148}) imply
(\ref{eq:0923141})-(\ref{eq:0923145}),  and hence the proof of Theorem~\ref{prop:0928141}.

\bibliographystyle{amsalpha}

\end{document}